\newcommand{\ignore}[1]{}
\newcommand{\fmax}{f_{\text{\rm max}}}
\newcommand{\fmin}{f_{\text{\rm min}}}
\newcommand{\oR}{{\mathbb R}}
\newcommand{\oN}{{\mathbb N}}
\newcommand{\PP}{{ P}}
\newcommand{\R}{{\mathbb R}}
\newcommand{\Z}{{\mathbb Z}}
\newcommand{\Q}{{\mathbb Q}}
\newcommand{\N}{{\mathbb N}}
\let\ve=\mathbf
\renewcommand{\ll}{{\langle}}
\newcommand{\rr}{{\rangle}}
\newcommand{\Conv}{\operatorname{Conv}}
\newcommand{\vol}{{\mathrm{\rm vol}}}
\renewcommand{\ll}{{\langle}}
\renewcommand\d{\mathrm d}
\newcommand{\norm}[1]{\left\lVert#1\right\rVert}
\renewcommand\d{\,\mathrm{d}}
\begin{document}

\title{Approximating the maximum of a polynomial over a polytope: Handelman decomposition and continuous generating functions
}

\titlerunning{Handelman and generating functions for polynomial optimization}        

\author{Jes\'us A. De Loera$^{1,2,5}$ \and Brandon Dutra$^{1,3}$ \and Matthias K\"{o}ppe$^{1,4}$}

\authorrunning{ Jes\'us A. De Loera, Brandon Dutra,   Matthias K\"{o}ppe} 

\institute{
$^1$ Department of Mathematics \\
University of California, Davis \\
One Shields Ave, Davis, CA 95616\\
Tel.: (530) 752-0827\\
$^2$\email{deloera@math.ucdavis.edu} \\
$^3$\email{bedutra@ucdavis.edu} \\
$^4$\email{mkoeppe@math.ucdavis.edu} \\
$^5$Corresponding author
}

\date{Received: date / Accepted: date}

\maketitle

\begin{abstract}
We investigate a way to approximate the maximum of a polynomial $f$ over a polytopal region $P$ through the computation of the integrals $\int_{\PP} f(x)^k\d x$.  We do
a refined analysis of the quality of approximation for the resulting upper and lower bounds. We also propose a new methodology to compute those integrals using a Handelman polynomial decomposition and  continuous multivariate generating functions.  
 
\keywords{polynomial optimization \and semi-algebraic optimization \and Handelman decomposition \and exact integration over polytopes \and polynomial-time approximation schemes}

\end{abstract}



\section{Introduction}
Let $\PP \subset \R^d$ be a $d$-dimensional rational polytope given by the inequality description $\PP := \{ x \in \R^d \mid Ax \leq b\}$ where $A \in \Q^{n \times d}$, and $b \in \Q^n$. We study a new methodology for approximating the global maximum of a polynomial function over the polytope $\PP$. More precisely, we will develop a sequence of lower and upper bounds to the problem

\begin{equation}
\label{equ:continuousGeneralOpt}
\begin{split}
\max & \; f(x) \\
 & x \in \PP,
\end{split}
\end{equation}
where $f(x) \in \Q[x_1, \dots, x_d]$ is a polynomial that is \emph{nonnegative} on $\PP$. We denote this maximum by $\fmax$. We transfer the ideas in \cite{deloera-hemmecke-koeppe-weismantel:intpoly-fixeddim}, where they developed an algorithm for approximating the global maximum of a polynomial function over the \emph{integer points} of a convex polytope. In \cite{deloera-hemmecke-koeppe-weismantel:intpoly-fixeddim}, their bounds involved evaluating the sum $\sum_{x \in \PP \cap \Z^d} f(x)^k$ in polynomial time in $k$ when the dimension $d$ is fixed and the degree of $f$ is bounded. In Section \ref{sec:using-knorms-general} we prove an analogous approximation algorithm when the domain is \emph{continuous} which relies on evaluating the integral $\int_{\PP} f(x)^k\d x$. 

Let us recall prior results on the complexity of maximizing a polynomial over a polytope. When the dimension is allowed to vary, the exact optimization problem is NP-hard, as it includes such problems as the max-cut problem, which is equivalent to quadratic optimization over the hypercube, and the maximum stable set problem, which is equivalent to quadratic optimization over the standard simplex \cite{Hastad99someoptimal}.  Even approximating the optimal objective value of polynomial programming problems is still hard. For instance, Bellare and Rogaway \cite{Bellare:1995} proved that if $P \neq NP$ and $\epsilon \in (0,\frac{1}{3})$ is fixed, there does not exist an algorithm $\mathcal{A}$ for continuous quadratic programming over polytopes that produces an approximation $f_\mathcal{A}$ in polynomial time where $|f_\mathcal{A} - \fmax| < \epsilon |\fmax - \fmin|$. For a survey on the complexity for exactly or approximately optimizing a polynomial on $\PP$, see \cite{koeppeComplexity2012}. 

When the dimension is fixed, there are many polynomial-time
approximation algorithms for polynomial programming. For instance
Parrilo \cite{parrilo2003SDP}, building on work by Lasserre
\cite{Lasserre01globaloptimization} and Nesterov
\cite{nesterov2000sqFunctinalSystems}, developed such methods using
sums of squares optimization. These methods are further developed in
\cite{anjos2011handbook,marshall2008positive}. In
\cite{deKlerk2006210} and
\cite{deloera-hemmecke-koeppe-weismantel:mixedintpoly-fixeddim-fullpaper},
polynomial time approximation algorithms were developed that
discretized the domain. Handelman's theorem (which we discuss below) has  been
used to optimize a polynomial in
\cite{laurentsurvey,monique2014,ParriloSturmfels2003,sankaranarayanan2013lyapunov}. Other
methods for polynomial optimization have been developed in
\cite{Behrends2015,Claudia2014,Thanh2013,tawarmalani2002convexification,baron}.

As we noted our method here relies on integrating $\int_{\PP} f(x)^k\d x$. 
There are many references in the literature on how to compute $\int_{\PP} f(x)^k\d x$ numerically \cite{CUBPACK} or symbolically \cite{bronstein2005symbolic}. We use efficient algorithms for computing $\int_{\PP} f(x)^k\d x$ that take advantage of fast computations with continuous generating functions. Such methods have been developed in recent years \cite{baldoni-berline-deloera-koeppe-vergne:integration,deloera:software-exact-integration-polynomials}. In \cite{baldoni-berline-deloera-koeppe-vergne:integration}, the focus is on the case when the polytope $\PP$ is a simplex and the integrand is a power of a linear form (e.g., $(x_1 + 2x_2 - x_3)^5$), or a product of linear forms (e.g., ${(x_1 + 2x_2)^2(3x_1 + x_3)^4}$). Then in \cite{deloera:software-exact-integration-polynomials}, the case when $\PP$ is an arbitrary full-dimensional polytope and the integrand is a power of a linear form is developed. To integrate a polynomial $f(x) \in \Q[x_1, \dots, x_d]$, both papers decompose $f(x)$ to a sum of powers of linear forms using a simple formula, which we will review in Section \ref{sec:handelman-generating-function}.

The first main contribution of this paper is Theorem \ref{theorem:continuous-lkuk-bounds}. It is the continuous analog of Theorem 1.1 
in \cite{deloera-hemmecke-koeppe-weismantel:intpoly-fixeddim} for optimizing a polynomial $f$ over the continuous domain $\PP$. 
It can also be seen as the limit case of using a grid-summation method for 
polynomial optimization from \cite{deloera-hemmecke-koeppe-weismantel:mixedintpoly-fixeddim}.  The idea to use moments of $f(x)$ for optimization is a common idea, see for example \cite{barvinok2002estimating,barvinok2007integration}. Here we give a new precise analysis of such ideas.

\begin{theorem}
\label{theorem:continuous-lkuk-bounds}
Let the number of variables $d$ be fixed. Let $f(x)$  be a polynomial of total degree $D$ with rational coefficients, and let $\PP$ be a full-dimensional convex rational polytope defined by linear inequalities in $d$ variables. Assume $f(x)$ is nonnegative over $\PP$. Then there are an increasing sequence of lower bounds $\{L_k\}$ and a decreasing sequence of upper bounds $\{U_k\}$ for $k \geq k_0$ to the optimal value of 
\[ \max  f(x) \text{ such that }  x \in \PP\]
that have the following properties:
\begin{enumerate}
\item For each $k > 0$, the lower bound is given by
\[ L_k := \sqrt[k]{\frac{\int_{\PP} f(x)^k \d x}{ \vol(\PP)}}. \] 

\item Let $M$ be the maximum width of $\PP$ along the $d$ coordinate directions, $\gamma_k := \frac{d}{d+k}$, and let $\mathcal{L}$ be a Lipschitz constant of $f$ satisfying
\[ |f(x) - f(y)| \leq \mathcal{L} \norm{x-y}_\infty \text{ for } x,y \in \PP.\]
Then for all $k$ such that $k \geq k_0$ where $k_0 = \max\{1, d(\frac{\fmax}{M\mathcal{L}} -1)\}$, the upper bound is given by

\[U_k := \left( \frac{\int_\PP f(x)^k \d x}{\vol(\PP)} \right)^{1/(d+k)} \cdot (M\mathcal{L})^{\gamma_k} \frac{1}{\gamma_k^{\gamma_k}} \frac{1}{(1-\gamma_k)^{1 - \gamma_k}}.
\]

\item If $M$ and $\mathcal{L}$ are rational, then $L_k^k$ and $U_k^{d+k}$ can be computed exactly as a rational number in polynomial time, for fixed $d$, in unary encoding of $D$ and $k$ and binary encoding of $\PP$ and $f(x)$.


\item Let $\epsilon > 0$, and $U$ be an arbitrary initial upper bound for $\fmax$, then there is a $k \geq k_0$ such that \begin{enumerate}[(i)] \item $U_k - L_k \leq \epsilon\fmax$ where $k$ depends polynomially on $1/\epsilon$, linearly on $U/M\mathcal{L}$, and logarithmically on $M \mathcal{L}/U$, and \item for this choice of $k$, $L_k$ is a $(1-\epsilon)$--approximation to the optimal value $\fmax$ and $U_k$ is a $(1 + \epsilon)$--approximation to $\fmax$.\end{enumerate}
\end{enumerate}
\end{theorem}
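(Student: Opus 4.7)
I would handle the four claims in sequence, with the geometric scaling argument for the upper bound as the central ingredient. For part (1), the bound $L_k \leq \fmax$ is immediate from $f \leq \fmax$ on $\PP$, and monotonicity $L_k \leq L_{k+1}$ is the power-mean (Jensen) inequality applied on the probability space $(\PP, \d x/\vol(\PP))$.

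For part (2), let $y^{*} \in \PP$ be a maximizer of $f$. For $t \in (0,1]$ I would consider the contracted polytope $\PP_t := y^{*} + t(\PP - y^{*}) \subseteq \PP$, which by convexity has volume $t^d \vol(\PP)$. Any $z \in \PP_t$ satisfies $\|z - y^{*}\|_\infty \leq t M$ because each coordinate width of $\PP$ is at most $M$ and both $z,y^*$ lie in $\PP$; the Lipschitz hypothesis then gives $f(z) \geq \fmax - tM\mathcal{L}$, which is nonnegative whenever $t \leq \fmax/(M\mathcal{L})$. Integrating over $\PP_t$ yields
\[
\int_\PP f(x)^k \d x \;\geq\; t^d\,(\fmax - tM\mathcal{L})^k\,\vol(\PP)
\]
for all such $t$. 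Elementary calculus locates the optimum at $t^{*} = \gamma_k \fmax/(M\mathcal{L})$, and the constraint $t^{*} \leq 1$ is exactly $k \geq d(\fmax/(M\mathcal{L}) - 1)$, i.e.\ $k \geq k_0$. Plugging in $t^{*}$ gives $\fmax - t^{*}M\mathcal{L} = (1-\gamma_k)\fmax$, so the right-hand side becomes $\frac{d^d k^k}{(d+k)^{d+k}} \fmax^{d+k} (M\mathcal{L})^{-d} \vol(\PP)$. Taking the $(d+k)$-th root and rearranging produces $\fmax \leq U_k$, with the entropy-like factor $\gamma_k^{-\gamma_k}(1-\gamma_k)^{-(1-\gamma_k)}$ appearing naturally.

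For part (3), $L_k^k = \int_\PP f(x)^k \d x/\vol(\PP)$ is rational and computable in polynomial time for fixed dimension via the continuous generating function techniques recalled in the introduction (the integrand $f(x)^k$ has degree $Dk$, polynomial in the unary encoding of $D$ and $k$), and $U_k^{d+k}$ differs from $L_k^k$ by a rational factor in $M,\mathcal{L},d,k$. For part (4), the formula gives
\[
\log(U_k/\fmax) \;\leq\; \gamma_k \log(M\mathcal{L}/\fmax) + H(\gamma_k),
\]
where $H$ is the binary entropy; since $\gamma_k = d/(d+k) \sim d/k$ and $H(\gamma_k) \sim \gamma_k \log(1/\gamma_k)$, choosing $k$ polynomial in $1/\epsilon$, linear in $U/(M\mathcal{L})$ and logarithmic in $M\mathcal{L}/U$ (after substituting the a priori bound $\fmax \leq U$) forces $U_k/\fmax \leq 1 + \epsilon/2$; a symmetric argument using the lower estimate of $\int_\PP f^k$ derived in part~(2) gives $L_k/\fmax \geq 1 - \epsilon/2$, hence $U_k - L_k \leq \epsilon \fmax$. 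Claim (ii) is then immediate from the sandwich $L_k \leq \fmax \leq U_k$. Monotonicity of the upper sequence can be enforced by replacing $U_k$ with its running minimum $\min_{k_0 \leq j \leq k} U_j$ without disturbing the other properties.

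The main obstacle is part (2): choosing the homothety $\PP_t$ is the key geometric idea, and the calibration so that the optimizing $t^{*}$ lies in $(0,1]$ \emph{precisely} when $k \geq k_0$ is what makes the formula for $U_k$ match exactly, including the Bernoulli-entropy correction. Once this inequality is in hand, parts (1), (3), (4) are essentially bookkeeping on top of the standard power-mean and generating-function machinery.
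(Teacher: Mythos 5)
Your proposal is correct in substance and, for the crucial part (2), rests on the same geometric idea as the paper---a homothety of $\PP$ centered at the maximizer---but executes it more directly. The paper introduces the superlevel set $\PP^{\gamma}=\{x\in\PP: f(x)\ge(1-\gamma)\fmax\}$, applies H\"older's inequality to relate $\int_{\PP^\gamma}f\,\d x$ to $\bigl(\int_\PP f^k\,\d x\bigr)^{1/k}$, and then lower-bounds $\vol(\PP^\gamma)$ by the volume of the scaled copy $\delta(\PP-x^*)+x^*$ with $\delta=\min\{1,\gamma\fmax/(M\mathcal L)\}$. Your $\PP_t$ is exactly that scaled copy with $t=\gamma\fmax/(M\mathcal L)$, and integrating the pointwise bound $f\ge\fmax-tM\mathcal L$ over it reaches the identical inequality $\int_\PP f^k\,\d x\ge t^d(\fmax-tM\mathcal L)^k\vol(\PP)$ without H\"older; the optimization over $t$ and the calibration $t^*\le 1\iff k\ge d(\fmax/(M\mathcal L)-1)$ correspond exactly to the paper's optimization over $\gamma$ with $\gamma=\gamma_k$, and you recover the same $U_k$. (One wording nit: $\norm{z-y^*}_\infty\le tM$ follows from $z-y^*=t(x-y^*)$ with $x\in\PP$, not merely from $z,y^*\in\PP$.) Your part (1) (power means, which the paper does not spell out for monotonicity) and part (3) match the paper's treatment.

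There is one step in your part (4) that does not work as written, and it is the same step at which the paper's own proof stumbles: substituting the \emph{upper} bound $U\ge\fmax$ into $\gamma_k\log(M\mathcal L/\fmax)$ goes the wrong way, since $U\ge\fmax$ gives $\log(M\mathcal L/U)\le\log(M\mathcal L/\fmax)$ and hence no upper bound on the quantity you must make small. (The paper replaces $\bigl(\int_\PP f^k\d x/\vol(\PP)\bigr)^{-\gamma_k/k}=L_k^{-\gamma_k}$ by $U^{-\gamma_k}$, which would require $L_k\ge U$ and is likewise reversed.) To make the $\epsilon$-analysis airtight one needs a positive \emph{lower} bound on $\fmax$ (e.g.\ $f(x_0)$ for any $x_0\in\PP$, or $L_{k_0}$, using that $\{L_k\}$ is increasing), and the logarithmic dependence of $k$ should be on $M\mathcal L$ divided by that lower bound. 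The legitimate use of $U$ is the one you and the paper both make at the outset of part (4), namely forcing $k\ge d(U/(M\mathcal L)-1)\ge d(\fmax/(M\mathcal L)-1)$ so that the formula for $U_k$ applies; the remaining bookkeeping (the entropy term $\gamma_k^{-\gamma_k}(1-\gamma_k)^{-(1-\gamma_k)}$ and the split of $1+\epsilon$ into factors) is the same in both arguments.
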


We note that Theorem \ref{theorem:continuous-lkuk-bounds} involves integrating the polynomial $f(x)$. Integral kernel or moment methods produce an approximation to $\fmax$ via computing $\int f(x)\d \mu_k$ where the measure $\mu_k$ usually involves a special subset of  nonnegative functions, like sum-of-squares polynomials. Such  methods have been developed in \cite{deKlerk2015,lasserre2009momentsBook,Lasserre2000929,Lasserre01globaloptimization,lasserre2002semidefinite,lasserre2011NewLook}. Our method is 
different as our measure is always the standard Lebesgue measure, while our integrand is $f(x)^k$.  The proof will be presented in Section \ref{sec:using-knorms-general}.
An example of Theorem \ref{theorem:continuous-lkuk-bounds} and a comparison with methods from \cite{deloera-hemmecke-koeppe-weismantel:mixedintpoly-fixeddim-fullpaper} will be presented in Section \ref{example+comparison}.

As we observed before, special decompositions of polynomials are fundamental for fast integration using generating functions. 
In the present paper, we propose an alternative decomposition of the polynomial $f(x)$ into a sum of products of affine linear functions 
(e.g., ${(x_1 + 2x_2 + 3)^2(3x_1 + x_3 - 1)^4}$) using a Handelman decomposition:

\begin{theorem}[Handelman \cite{handelman1988}]\label{theoHan}
Assume that $g_1,\dots,g_n\in\oR[x_1, \dots, x_d]$ are linear polynomials that define a polytope 
\begin{equation}\label{eqK}
\PP=\{x\in\R^d:g_1(x)\ge0,\dots,g_n(x)\ge0\}
\end{equation}
which is compact and has a non-empty interior. Then any polynomial $f\in\oR[x_1, \dots, x_d]$ strictly positive on $\PP$ can be written as
$f(x) = \sum_{\alpha\in\oN^n} c_\alpha g_1^{\alpha_1}\cdots g_n^{\alpha_n}$
for some nonnegative scalars $ c_{\alpha}$.
\end{theorem}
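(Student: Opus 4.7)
The plan is to reduce the statement to P\'olya's classical theorem, which asserts that any homogeneous polynomial $F(y_1, \dots, y_n)$ strictly positive on the standard simplex $\Delta = \{y \in \oR^n_{\geq 0} : \sum_i y_i = 1\}$ has, for all sufficiently large $N$, the property that $\bigl(\sum_i y_i\bigr)^N F(y)$ has only nonnegative coefficients. Applying this after a suitable normalization and lift will produce the Handelman decomposition.

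First, I would achieve an Archimedean normalization of the defining inequalities. Since $\PP$ is compact, a standard polyhedral duality argument (Gordan's theorem applied to the gradients of the $g_i$) produces nonnegative constants $c_1, \dots, c_n$, not all zero, with $\sum_{i=1}^n c_i g_i(x) \equiv 1$ identically in $x$; after excluding identically zero forms and perturbing if necessary, all $c_i$ may be taken strictly positive. Replacing $g_i$ by $c_i g_i$, which does not alter $\PP$, I may assume $\sum_{i=1}^n g_i(x) \equiv 1$.

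Next I would lift $f$ to a form in $n$ variables. Because $\PP$ has non-empty interior, the affine forms $1, g_1, \dots, g_n$ span all affine functions on $\oR^d$, and hence $f$ admits a (non-unique) representation $f(x) = \tilde F(g_1(x), \dots, g_n(x))$ with $\tilde F \in \oR[y_1, \dots, y_n]$. The map $\psi(x) := (g_1(x), \dots, g_n(x))$ sends $\PP$ bijectively onto a $d$-dimensional polytope $\psi(\PP) = A \cap \Delta$, where $A$ is the affine slice of $\oR^n$ cut out by the linear relations among $1, g_1, \dots, g_n$. On $\psi(\PP)$ the lift $\tilde F$ coincides with $f \circ \psi^{-1}$ and is therefore strictly positive, but a priori $\tilde F$ need not be positive on the rest of $\Delta$.

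This last point is the main obstacle, and I would resolve it as follows. Any polynomial vanishing on $\psi(\PP)$ may be added to $\tilde F$ without altering $\tilde F \circ \psi = f$. Taking affine equations $L_1, \dots, L_k$ that define $A$, set $q(y) := \sum_j L_j(y)^2$; this vanishes exactly on $A$, so on $\Delta$ it vanishes precisely on $\psi(\PP)$ and is strictly positive elsewhere. By compactness of $\Delta$, for $K$ large enough $\tilde F + Kq$ is strictly positive on all of $\Delta$. Homogenizing $\tilde F + Kq$ to degree $D' := \deg(\tilde F + Kq)$ by padding each monomial $y^\beta$ with the factor $\bigl(\sum_i y_i\bigr)^{D' - |\beta|}$ yields a form $F$ that is homogeneous of degree $D'$, agrees with $\tilde F + Kq$ on $\Delta$ (hence is strictly positive there), and satisfies $F(g_1(x), \dots, g_n(x)) = f(x)$ because $\sum_i g_i \equiv 1$. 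P\'olya's theorem now supplies $N$ and nonnegative scalars $c_\alpha$ with $\bigl(\sum_i y_i\bigr)^N F(y) = \sum_{|\alpha| = N + D'} c_\alpha y^\alpha$; substituting $y_i = g_i(x)$ collapses the left-hand side to $f(x)$ and produces $f(x) = \sum_\alpha c_\alpha g_1^{\alpha_1} \cdots g_n^{\alpha_n}$ with $c_\alpha \geq 0$, the desired Handelman decomposition.
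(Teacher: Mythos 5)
Your proof is correct. There is, however, nothing in the paper to compare it against line by line: Theorem \ref{theoHan} is quoted as a black box from \cite{handelman1988} and never proved. The closest internal analogue is the paper's proof of Theorem \ref{thm:large-s-given-t}, and your argument runs parallel to it: both normalize so that $\sum_i g_i \equiv 1$, write the coordinates $x_j$ as linear combinations of the $g_i$, lift $f$ to a homogeneous polynomial on the standard simplex in $\R^n$, and invoke P\'olya (the paper uses the effective Powers--Reznick form, Lemma \ref{lemma:PowersPolya}). The one genuinely different ingredient is how you arrange positivity on \emph{all} of the simplex rather than merely on the slice $\psi(\PP)=A\cap\Delta$ where the lift actually represents $f$: the paper may simply add a large multiple of $(\sum_i y_i)^D$ because Theorem \ref{thm:large-s-given-t} concerns $f+s$, whereas Handelman's theorem permits no shift, and your correction term $Kq$ with $q=\sum_j L_j^2$ --- strictly positive off the affine hull of $\psi(\R^d)$, hence invisible after substituting $y_i=g_i(x)$ --- is exactly the right repair. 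This is essentially the P\'olya-based proof of Handelman's theorem for polytopes given in \cite{powers2001new}, which buys you, if you track the constants, the same kind of explicit degree bounds the paper extracts in Theorem \ref{thm:large-s-given-t}; Handelman's original argument is quite different and yields no such bounds. One cosmetic point: the fact that $1,g_1,\dots,g_n$ span the affine functions follows from compactness (boundedness forces the linear parts of the $g_i$ to positively span $\R^d$), not from the non-empty interior; the interior hypothesis is what you actually use to conclude that the constant $\sum_i c_i g_i$ is strictly positive before rescaling. Neither affects correctness.
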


Note that the polynomials $g_i(x)$ correspond to the rows in the constraint matrix $b - Ax \geq 0$ of the polytope $\PP$. In the case of the hypercube $\PP=[0, 1]^d$, this result was shown  earlier by Krivine \cite{Krivine1964}. See \cite{Castle20091285,deKlerk2015,monique2014} for a nice introduction to  Handelman decompositions. A Handelman decomposition is only guaranteed to exist if the polynomial is strictly greater than zero on $\PP$, and the required degree of a Handelman decomposition can grow as the minimum of the polynomial approaches zero. See \cite{powers2001new,sankaranarayanan2013lyapunov} and the references therein.  

Traditionally  Handelman decompositions were used directly (without integration) for optimization. Here
we propose using Handelman decompositions as an alternative way to decompose the input polynomials for integration. This time the decomposition is a sum of products of affine linear forms.
The difference between products of linear forms and products of affine functions is that the latter is allowed to have constant terms in the factors. The Handelman decomposition method has advantages over decomposing $f(x)$ into a sum of powers of linear forms as we will show in Section \ref{sec:handelman-example}.  Our key engine for calculating a sparse Handelman decomposition
is the solution of linear programming problems which are quite efficient and, under  some conditions, are polynomial time computable. We demonstrate the practical potential of this method
to recover short (or sparse) sums through experiments. See Section \ref{sec:handelman-generating-function}.

Our interest in Handelman decompositions of a polynomial was for integration.  We  extended the ideas in \cite{baldoni-berline-deloera-koeppe-vergne:integration,deloera:software-exact-integration-polynomials} to the case when the integrand is a Handelman summand. For this we developed a new generating function method for integrating a product of affine functions over a polytope in Section \ref{sec:gen-fun-for-integration}.

Our second main theorem assures that, when the dimension and the number of inequalities defining the polytope are both constant, one can recover efficiently a Handelman decomposition of some translation $f(x)+s$ of the polynomial $f(x)$. The translation $s$ is necessary when $f(x)$ is not nonnegative on $\PP$.  However, adding an arbitrary large $s$ to $f(x)$ is not ideal as the bounds $L_k$ and $U_k$ from Theorem \ref{theorem:continuous-lkuk-bounds} guarantee that $U_k - L_k \leq \epsilon (\fmax +s)$. 
We therefore use Handelman decompositions as a practical heuristic for finding a sparse decomposition and a small shift so that $f(x)+s$ is nonnegative on $\PP$.

\begin{theorem}
\label{theorem:handelman-good-s-on-simplex}
Fix the dimension $d$ and the number $n$ of facets of the polytope $\PP$. Let $D$ be the degree of $f$. The polynomial $f$ could take negative values on $\PP$. 
\begin{enumerate}
\item There is an algorithm (Algorithm \ref{alg:knorm-handelman}) that runs in time polynomial in the unary encoding size of $k$, $D$, and the binary encoding  size of $f$ and $\PP$,   that  computes a number $s$ with $f(x)+s \geq 0$ on $\PP$, determines a Handelman decomposition of $(f(x)+s)^k$, and evaluates the integral $\int_\PP (f(x)+s)^k \d x$. 

\item Moreover, when $\PP$ is a full dimensional simplex and $\epsilon > 0$ is a small number, an $s$ can be computed in polynomial time in the unary encoding size of $D$ and the binary encoding size of $f(x)$ and $\PP$  such that Theorem \ref{theorem:continuous-lkuk-bounds} produces bounds on $f(x)+s$ with $U_k - L_k \leq \epsilon (\fmax - \fmin)$ where $k$ depends polynomially on $1/\epsilon$ and $D^D \binom{2D-1}{D}$. 
\end{enumerate}
\end{theorem}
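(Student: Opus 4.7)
The plan is to merge the search for $s$ and the construction of the Handelman decomposition into one linear program. Writing the facet inequalities of $\PP$ as $g_i(x) = b_i - A_{i,\cdot}x \ge 0$, I would introduce LP variables $s\in\R$ and $c_\alpha \ge 0$ for $\alpha\in\N^n$ with $|\alpha|\le t$, and impose the polynomial identity $f(x)+s = \sum_\alpha c_\alpha g_1^{\alpha_1}\cdots g_n^{\alpha_n}$ by equating coefficients of every monomial $x^\beta$. With $n$ and $d$ fixed, both the $\binom{n+t}{n}$ Handelman variables and the $\binom{d+\max(D,t)}{d}$ coefficient-matching equations are polynomial in $t$ and $D$. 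A crude interval-arithmetic bound on $-\fmin$ obtained from the facet description of $\PP$ supplies a safe initial shift; using a quantitative form of Handelman's Theorem~\ref{theoHan}, for some $t$ polynomial in $D$ the LP is feasible, and minimizing $s$ produces $(s,\{c_\alpha\})$ in polynomial time. Raising to the $k$-th power then yields $(f+s)^k = \sum_{|\sigma|\le kt} C_\sigma\, g_1^{\sigma_1}\cdots g_n^{\sigma_n}$ with polynomially many distinct $\sigma$ for fixed $n$, and each $\int_\PP g^\sigma\d x$ is evaluated by the generating-function procedure of Section~\ref{sec:gen-fun-for-integration}.

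\textbf{Plan for Part 2.} I would exploit the identification of the Handelman basis on a simplex with the Bernstein basis. After an affine change of coordinates placing $\PP$ as the standard simplex, the facet forms become $x_1,\dots,x_d$ and $1-\sum_i x_i$, and their degree-$D$ products coincide, up to multinomial factors, with the Bernstein polynomials $B_\alpha^D$, $|\alpha|=D$. Any polynomial of degree $\le D$ has a unique Bernstein expansion $f = \sum_{|\alpha|=D} c_\alpha B_\alpha^D$, computable from $f$ in polynomial time by basis conversion. Because $B_\alpha^D \ge 0$ and $\sum_\alpha B_\alpha^D \equiv 1$ on $\PP$, we have $\min_\alpha c_\alpha \le \fmin$, so the choice $s := -\min_\alpha c_\alpha$ makes $f+s \ge 0$ on $\PP$ and can be output directly. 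The key quantitative ingredient is a Bernstein lower-bound rate on the simplex of the form $\fmin - \min_\alpha c_\alpha \le \kappa(D)(\fmax - \fmin)$ with $\kappa(D) \le D^D\binom{2D-1}{D}$; this yields $\fmax(f+s) \le (1+\kappa(D))(\fmax-\fmin)$. Plugging $f+s$ into Theorem~\ref{theorem:continuous-lkuk-bounds}(4) with tolerance $\epsilon' = \epsilon/(1+\kappa(D))$, and using $U/(M\mathcal{L}) \le 1+\kappa(D)$ via the Lipschitz inequality $\fmax - \fmin \le M\mathcal{L}$, produces a $k$ polynomial in $1/\epsilon$ and in $D^D\binom{2D-1}{D}$ with $U_k - L_k \le \epsilon(\fmax - \fmin)$, as required.

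\textbf{Main obstacle.} The technical heart of the argument is proving the quantitative Bernstein rate $\fmin - \min_\alpha c_\alpha \le D^D\binom{2D-1}{D}(\fmax - \fmin)$ on the simplex. Estimating how far the smallest degree-$D$ Bernstein coefficient of $f$ can lie below $\fmin$ requires tracking the change-of-basis multinomials $D!/(\alpha_0!\cdots\alpha_d!)$, which account for the $D^D$-type factor, and the combinatorial count of degree-$D$ multi-indices of the simplex, which accounts for the $\binom{2D-1}{D}$ factor. Once this estimate is established, propagating it through Theorem~\ref{theorem:continuous-lkuk-bounds}(4) and calibrating $\epsilon'$ is routine bookkeeping.
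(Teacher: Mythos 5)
Your Part 1 follows the paper's route (one LP in $s$ and the $c_\alpha$, expand $(f+s)^k$, integrate each Handelman monomial by the generating-function method), but the step you dispose of with ``using a quantitative form of Handelman's Theorem, for some $t$ polynomial in $D$ the LP is feasible'' is precisely what the paper has to prove as Theorem~\ref{thm:large-s-given-t}. Handelman's theorem itself gives no degree bound, and the degree required can blow up as the minimum of $f+s$ on $\PP$ tends to zero, so a ``safe initial shift'' does not by itself make the degree-$t$ LP feasible. The paper's argument pulls the problem back to the standard simplex, applies the P\'olya-type bound of Powers--Reznick (Lemma~\ref{lemma:PowersPolya}), and then shows that taking $s$ large enough forces the ratio $L(\widetilde h)/\lambda(\widetilde h)$ to $1$, which pins the degree at $t=D(D-1)+1$ independently of $f$. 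Without some such argument your Part 1 is incomplete.

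In Part 2 you take a genuinely different route --- degree-$D$ Bernstein coefficients on the simplex with $s=-\min_\alpha c_\alpha$ --- but the ``key quantitative ingredient'' you defer, namely $\fmin-\min_\alpha c_\alpha\le D^D\binom{2D-1}{D}(\fmax-\fmin)$ \emph{at Bernstein degree exactly $D$}, is a real gap and is not supported by the estimate the paper uses. The paper instead invokes the Laurent--Sun bound (Lemma~\ref{lemma:s-bound-for-simplex}), $s-q_{\max}\le D^D\binom{2D-1}{D}\frac{\binom{D}{2}}{t-\binom{D}{2}}(q_{\max}-q_{\min})$, which is only meaningful when the Handelman degree satisfies $t>\binom{D}{2}$; that is exactly why the paper works at degree $t=D(D-1)+1$ rather than $t=D$, and why it must also show (via the change of variables $y=U^{-1}Hx$) that the optimal shift on an arbitrary simplex equals the one on the standard simplex. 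Your downstream bookkeeping ($\epsilon'=\epsilon/(1+\kappa(D))$, $U_k-L_k\le\epsilon'(\fmax+s)\le\epsilon(\fmax-\fmin)$) is fine and matches the paper's, but it rests entirely on the unproven degree-$D$ estimate. To close the gap, either prove that estimate directly (this is the hard part, and the constant may fail at degree $D$) or raise the decomposition degree to $\Theta(D^2)$ and cite the existing bound as the paper does.
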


The proof of this theorem will be presented in the last section of the paper, Section \ref{proof-thm-last}.


\section{Using integration for optimization over a general polytope}
\label{sec:using-knorms-general}

This section is devoted to the proof of Theorem \ref{theorem:continuous-lkuk-bounds}.

\begin{proof}\textit{of Theorem \ref{theorem:continuous-lkuk-bounds}, part 1:}
This follows from the fact that $\int_\PP f(x)^k \d x \leq \int_\PP \fmax^k \d x.$ \qed
\end{proof}

\begin{proof}\textit{of Theorem \ref{theorem:continuous-lkuk-bounds}, part 2:} We take inspiration from the standard proof that $\lim_{p \rightarrow \infty} \norm{f}_p = \norm{f}_\infty$. Let  
$\gamma > 0$
, $\PP^{\gamma}:= \{ x \in \PP \mid f(x) \geq (1-\gamma)\fmax\}$ and $[\PP^{\gamma}]$ denote the characteristic function of $\PP^{\gamma}$. Then we have 
\[(1-\gamma)\fmax \cdot \vol(\PP^{\gamma}) \leq \int_{\PP^{\gamma}} f(x) \d x = \int_{\PP} f(x)[\PP^{\gamma}] \d x \leq \left( \int_\PP f(x)^k \d x \right)^{1/k} \vol(\PP^{\gamma})^{1/q}\]
where the last inequality comes from H\"older's inequality with $\frac{1}{k} + \frac{1}{q} = 1$. Rearranging the first and last expressions results in 
\[ \left( \int_\PP f(x)^k \d x \right)^{1/k} \geq (1-\gamma)\fmax \cdot \vol(\PP^{\gamma})^{1/k}.\]

We now seek a lower bound for $\vol(\PP^{\gamma})$. Because $f$ is a polynomial and $\PP$ is compact, $f$ has a Lipschitz constant $\mathcal{L}$ ensuring $|f(x) - f(y)| \leq \mathcal{L} \norm{x-y}_\infty$ for $x,y \in \PP$. Let $x^*$ denote an optimal point where $f(x^*) = \fmax$ and let \[B_\infty(x^*,r) := \{x \in \R^d : \norm{x-x^*}_\infty \leq r \}\] be a closed  ball of radius $r$ centered at $x^*$. Then for all $x$ in the ball $B_\infty(x^*, \gamma \fmax / \mathcal{L})$, $f(x) \geq (1-\gamma)\fmax$. Therefore,

\[\vol(P^{\gamma}) = \vol\{x \in P \mid f(x) \geq (1-\gamma)\fmax\} \geq \vol\{ P \cap B_\infty(x^*,\gamma \fmax / \mathcal{L}) \}. \]

\begin{figure}
        \centering
        \begin{subfigure}[b]{0.3\textwidth}
                \includegraphics{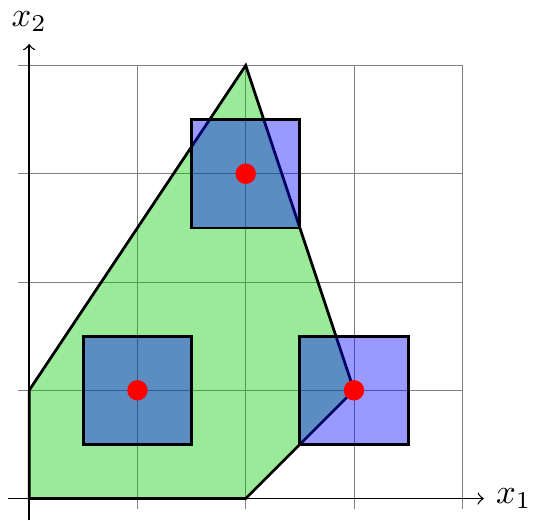}
                \caption{}
                \label{fig:pentagon-without}
        \end{subfigure}%
        \hspace{1in} 
        \begin{subfigure}[b]{0.3\textwidth}
                \includegraphics{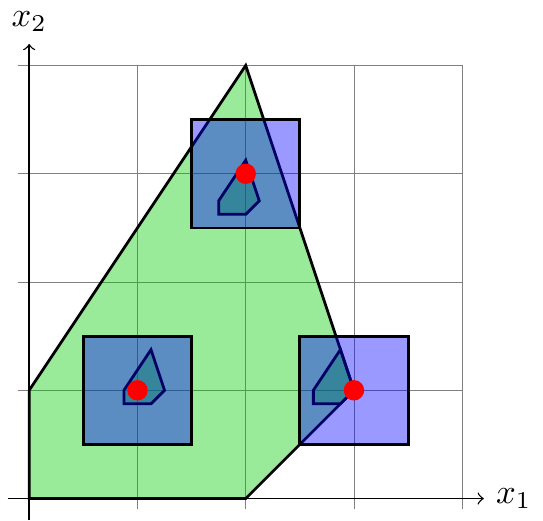}
                \caption{}
                \label{fig:pentagon-with}
        \end{subfigure}
\caption{(\protect\subref{fig:pentagon-without}) Pentagon with three possible points $x^*$ that maximize $f(x)$ along with the $\ell_\infty$ balls $B_\infty(x^*, 1/2)$. It might not be true that the ball is contained in the polytope. (\protect\subref{fig:pentagon-with}) However, a scaled pentagon can be contained in the ball and the original polytope.}
\label{fig:pentagons-scaled-balls}
\end{figure}

Notice that the ball $B(x^*,\gamma \fmax / \mathcal{L})$ may not be contained in $\PP$, see Figure \ref{fig:pentagon-without}. We need to lower bound the fraction of the ball that intersects $\PP$ where $x^*$ could be any point in $\PP$. To do this, we will show how a scaled $\PP$ can be contained in $\PP^{\gamma}$ as in Figure \ref{fig:pentagon-with}. Consider the invertible affine transformation $\Delta : \R^d \rightarrow \R^d$ given by $\Delta x := \delta(x-x^*) + x^*$ where $\delta > 0$ is some unknown scaling term. Notice that this function is simply a scaling about the point $x^*$ that fixes the point $x^*$. We need to pick $\delta$ such that $\Delta \PP \subseteq \PP \cap B_\infty(x^*,\gamma \fmax / \mathcal{L})$. We claim that choosing $\delta = \min\{1,\frac{\gamma \fmax}{M\mathcal{L}}\}$ suffices where $M$ is the maximum width of $\PP$ along the $d$ coordinate axes: $M := \max_{x, y \in \PP} \norm{x - y}_\infty$. First, since $\delta < \frac{\gamma \fmax}{M\mathcal{L}}$,
\[\norm{\Delta x - x^*}_\infty = \norm{\delta (x-x^*) + x^* -x^*}_\infty = \delta \norm{ x- x^*}_\infty \leq \frac{\gamma \fmax}{\mathcal{L}}.\]
Second, since $\delta \in (0,1)$ and $\PP$ is convex, 
\[\Delta x = \delta x + (1 - \delta) x^* \in \PP.\] 

%
%
%
%
%
%
Therefore, $\vol(P \cap  B_\infty(x^*,\gamma \fmax / \mathcal{L})) \geq \vol(\Delta \PP) = \left(\min\{1,\frac{\gamma \fmax}{M\mathcal{L}}\}\right)^d \vol(\PP),$ and finally, 

\begin{equation}
 \left( \int_\PP f(x)^k \d x \right)^{1/k} \geq (1-\gamma)\fmax \cdot \vol(\PP)^{1/k} \left(\min\{1,\frac{\gamma \fmax}{M\mathcal{L}}\}\right)^{d/k}. \label{ineq:lpnormbound}
\end{equation}

As the above inequality is true for all $0 < \gamma \leq 1$, we want to pick $\gamma$ to maximize the function $\phi(\gamma) := (1-\gamma) \left(\min\{1,\frac{\gamma \fmax}{M\mathcal{L}}\}\right)^{d/k}$. The maximum of $(1-\gamma) \left(\frac{\gamma \fmax}{M\mathcal{L}}\right)^{d/k}$ occurs at $\gamma = \frac{d}{d+k}$. Hence the maximum of $\phi(\gamma)$ occurs at $\gamma = \frac{d}{d+k}$ if this is less than $\frac{M\mathcal{L}}{\fmax}.$ Otherwise if $\gamma \geq \frac{M\mathcal{L}}{\fmax}$, the maximum of $\phi(\gamma)$ is at $\gamma = \frac{M\mathcal{L}}{\fmax}$. Therefore  the maximum of $\phi(\gamma)$ occurs at $\gamma =  \min\{\frac{d}{d+k}, \frac{M\mathcal{L}}{\fmax}\}$. Enforcing $\frac{d}{d+k} \leq \frac{M\mathcal{L}}{\fmax}$ is equivalent to $d(\frac{\fmax}{M\mathcal{L}} - 1) \leq k$. Let $\gamma_k := \frac{d}{d+k}$, then with this value restriction on $k$, we have $\gamma = \gamma_k$ and solving for $\fmax$ in Inequality \eqref{ineq:lpnormbound} yields the desired formula for $U_k$.
 \qed
\end{proof}

An important input to our approximation bounds is a Lipschitz constant $\mathcal{L}$ satisfying $|f(x) - f(y)| \leq \mathcal{L} \norm{x-y}_\infty$ for all $x, y \in \PP$. One natural way to compute this constant is by maximizing $\norm{\nabla f(x)}_1$ on $\PP$. However this is a difficult problem in general. Instead, one can compute a potentially larger constant by following the next lemma.

\begin{lemma}[Rewording of Lemma 11 in \cite{deloera-hemmecke-koeppe-weismantel:mixedintpoly-fixeddim-fullpaper}] 
\label{lemma:lemma_11}
Let $f$ be a polynomial in $d$ variables with maximum total degree $D$. Let $c$ denote the largest absolute value of a coefficient of $f$, and let $r$ denote the number of monomials of $f$. Then there exists a Lipschitz constant $\mathcal{L}$ such that $|f(x) - f(y)| \leq \mathcal{L} \norm{x-y}_\infty$ for all $|x_i|, |y_i| \leq \tilde{M}$. The constant $\mathcal{L}$ is $crD\tilde{M}^{D-1}$. 
\end{lemma}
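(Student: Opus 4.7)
The plan is to reduce the statement to a uniform bound on the $\ell_1$-norm of $\nabla f$ over the box $B:=\{x\in\R^d:\|x\|_\infty\le\tilde M\}$, then apply the mean value theorem along the segment from $y$ to $x$. Specifically, since $B$ is convex, the mean value theorem yields a point $\xi$ on the segment $[y,x]\subset B$ with
\[ f(x)-f(y)=\nabla f(\xi)\cdot(x-y), \]
and H\"older's inequality (pairing $\ell_1$ with $\ell_\infty$) then gives
\[ |f(x)-f(y)|\le \|\nabla f(\xi)\|_1\,\|x-y\|_\infty \le \Bigl(\sup_{\xi\in B}\|\nabla f(\xi)\|_1\Bigr)\|x-y\|_\infty. \]
Thus it suffices to show that the supremum on the right is at most $crD\tilde M^{D-1}$.

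For this, I would write $f(x)=\sum_{\alpha}c_\alpha x^\alpha$, where the sum ranges over the $r$ monomials actually appearing in $f$, and each $|c_\alpha|\le c$ and $|\alpha|\le D$. Differentiating termwise yields $\partial f/\partial x_i = \sum_\alpha c_\alpha \alpha_i\, x^{\alpha-e_i}$, so on $B$
\[ \sum_{i=1}^d \Bigl|\tfrac{\partial f}{\partial x_i}(\xi)\Bigr| \le \sum_{i=1}^d \sum_{\alpha} |c_\alpha|\,\alpha_i\,\tilde M^{|\alpha|-1} = \sum_{\alpha}|c_\alpha|\,|\alpha|\,\tilde M^{|\alpha|-1}, \]
after swapping summations and using $\sum_i \alpha_i=|\alpha|$. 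Bounding $|c_\alpha|\le c$, $|\alpha|\le D$ monomial by monomial, and (under the natural hypothesis $\tilde M\ge 1$ so that $\tilde M^{|\alpha|-1}\le \tilde M^{D-1}$) bounding each factor $\tilde M^{|\alpha|-1}$ by $\tilde M^{D-1}$, every one of the $r$ summands contributes at most $cD\tilde M^{D-1}$, producing the claimed bound $crD\tilde M^{D-1}$.

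The only delicate step is the last simplification: the bound $|\alpha|\,\tilde M^{|\alpha|-1}\le D\tilde M^{D-1}$ can fail when $\tilde M<1$ (e.g.\ for $|\alpha|=1$ the left side equals $1$ while the right side is small). I would handle this by observing that, without loss of generality, one may assume $\tilde M\ge 1$, since the Lipschitz constant is only used as an upper bound and replacing $\tilde M$ by $\max\{1,\tilde M\}$ only enlarges it; alternatively one can bound $|\alpha|\,\tilde M^{|\alpha|-1}$ by $D\max\{1,\tilde M\}^{D-1}$ and absorb the resulting factor. This is the only real subtlety, and once it is addressed the calculation above gives the lemma directly.
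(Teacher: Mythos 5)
The paper does not actually prove this lemma---it is imported verbatim, as a ``rewording of Lemma~11'' of the cited reference---so there is no in-paper proof to compare against. Judged on its own, your argument is correct and is one of the two standard routes: you bound $\sup_{\xi}\norm{\nabla f(\xi)}_1$ on the box via termwise differentiation and then apply the mean value theorem with the $\ell_1$--$\ell_\infty$ pairing, whereas the usual elementary derivation of this particular constant telescopes each monomial, $x^\alpha-y^\alpha=\sum_j(\cdots)(x_j-y_j)(\cdots)$, bounding each of the $|\alpha|\le D$ telescoping terms by $\tilde{M}^{|\alpha|-1}\norm{x-y}_\infty$ and summing over the $r$ monomials; both give exactly $crD\tilde{M}^{D-1}$ and need the same hypothesis at the same spot.

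Your one flagged subtlety is not pedantry but a genuine defect of the lemma as stated: the step $|\alpha|\,\tilde{M}^{|\alpha|-1}\le D\tilde{M}^{D-1}$ requires $\tilde{M}\ge 1$, and without that assumption the claimed constant can be strictly smaller than the true Lipschitz constant. For instance, $f(x)=x+x^2$ on $[-\tilde{M},\tilde{M}]$ with $\tilde{M}=0.1$ gives the claimed $\mathcal{L}=crD\tilde{M}^{D-1}=0.4$, yet $|f(0.1)-f(0)|=0.11>0.4\cdot 0.1$. So the lemma implicitly assumes $\tilde{M}\ge 1$ (harmless in the paper's use, where $\tilde{M}$ is a coordinate width of a bounding box and can be replaced by $\max\{1,\tilde{M}\}$), and your fix---enlarging $\tilde{M}$ to $\max\{1,\tilde{M}\}$---is exactly the right repair. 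Your proof is complete and correct with that amendment.
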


\begin{proof}\textit{of Theorem \ref{theorem:continuous-lkuk-bounds}, part 3:}

We first expand the polynomial $f(x)^k$ with respect to the standard monomial basis. Let $g(x) := f(x)^k$. The authors of \cite{baldoni-berline-deloera-koeppe-vergne:integration,deloera:software-exact-integration-polynomials} have developed algorithms for computing $\int_\PP g(x) \d x$ as an exact rational number that runs in polynomial time in the unary encoding of the degree of $g(x)$, and the binary encoding of $\PP$ and $g(x)$. 

The maximum width of $\PP$ along the $d$ coordinate directions, $M$, can be computed by solving $2d$ linear programs in the form $\min_{x \in \PP} x_i$ and $\max_{x \in \PP} x_i$ in polynomial time. 

Lemma \ref{lemma:lemma_11} produces a valid and easily computable Lipschitz constant for $f(x)$ on a box containing $\PP$. The $\tilde{M}$ in Lemma \ref{lemma:lemma_11} can be found from the same linear programs used to find $M$.
\qed
\end{proof}

Computing $L_k$ from $L_k^k$  requires approximating the real nonnegative $k$-th root with a lower bound, and similarly for $U_k$. This is the only source of numerical approximation that enters our method. Approximating these $k$-th or $(d+k)$-th roots with $\eta$ bits of precision can be done with $O(\log \eta)$ iterations of a Newton-Raphson type method, see \cite{chen1989fast}. We also remark that Sections \ref{sec:handelman-generating-function} and \ref{sec:gen-fun-for-integration} describe another polynomial time algorithm for computing $\int_\PP f(x)^k \d x$ that is similar to the methods in \cite{baldoni-berline-deloera-koeppe-vergne:integration,deloera:software-exact-integration-polynomials}.

\begin{lemma}
\label{lemma:loge}
$\frac{1}{\log(1+\epsilon)} \leq 1 + \frac{1}{\epsilon}$ for $0 < \epsilon \leq 1.$
\end{lemma}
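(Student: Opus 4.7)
The plan is to rewrite the inequality in a form where a single calculus argument settles it. Since $0 < \epsilon \le 1$ gives $\log(1+\epsilon) > 0$, I can cross-multiply by $\epsilon \log(1+\epsilon) > 0$ to see that the claim
\[ \frac{1}{\log(1+\epsilon)} \le 1 + \frac{1}{\epsilon} = \frac{1+\epsilon}{\epsilon} \]
is equivalent to
\[ \epsilon \le (1+\epsilon)\log(1+\epsilon). \]
This is a clean one-variable inequality, and it suggests comparing the two sides via their derivatives starting from $\epsilon = 0$.

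Next, I would set $g(\epsilon) := (1+\epsilon)\log(1+\epsilon) - \epsilon$ and observe that $g(0) = 0$. Differentiating,
\[ g'(\epsilon) = \log(1+\epsilon) + (1+\epsilon)\cdot\frac{1}{1+\epsilon} - 1 = \log(1+\epsilon), \]
which is nonnegative for every $\epsilon \ge 0$. Hence $g$ is nondecreasing on $[0,\infty)$, so $g(\epsilon) \ge g(0) = 0$ throughout that interval. This gives $\epsilon \le (1+\epsilon)\log(1+\epsilon)$ for all $\epsilon \ge 0$, and in particular for $\epsilon \in (0,1]$; dividing back yields the claimed bound.

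There is really no obstacle here; the only thing worth noting is that the proof works for every $\epsilon \ge 0$, so the hypothesis $\epsilon \le 1$ is not used and is presumably just the range in which the lemma is invoked later. If desired, one can alternatively verify the inequality by the series expansion $(1+\epsilon)\log(1+\epsilon) = \epsilon + \sum_{n\ge 2} \frac{(-1)^n \epsilon^n}{n(n-1)}$ and comparing term by term for $\epsilon \in (0,1]$, but the monotonicity argument above is shorter and valid on the whole nonnegative axis.
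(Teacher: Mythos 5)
Your proof is correct and follows essentially the same route as the paper's: both reduce the claim to a single-variable monotonicity argument. Your version is slightly cleaner in that clearing denominators makes the derivative computation trivial ($g'(\epsilon)=\log(1+\epsilon)\ge 0$) and avoids the L'H\^{o}pital limit the paper invokes, while also showing the hypothesis $\epsilon\le 1$ is not needed.
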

\begin{proof}
It is enough to show that $\phi(\epsilon):= (1 + \frac{1}{\epsilon})\log(1+\epsilon) \geq 1$ for $0 < \epsilon \leq 1$. This follows because $\phi(\epsilon)$ is increasing on $0 < \epsilon \leq 1$ and $\lim_{\epsilon \rightarrow 0} \phi(\epsilon) = 1$ from L'H\^{o}pital's rule.  \qed
\end{proof}


\begin{lemma}
\label{lemma:logdelta}
For every $\delta > 0$, there is $c_\delta > 0$ such that \[\log(z) \leq c_\delta z^{\frac{\delta}{1+\delta}}\] for all $z > 0$.  In particular, if $\delta = 0.1$, then $c_\delta$ can be set to $4.05$.
\end{lemma}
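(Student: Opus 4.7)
The plan is to bound the ratio $\log(z)/z^{\alpha}$ where $\alpha := \delta/(1+\delta) \in (0,1)$, and show that it attains a finite maximum over $z>0$ which we can take as $c_\delta$.

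First I would dispose of the easy range: for $0 < z \le 1$ we have $\log(z) \le 0 < c_\delta z^\alpha$ for any positive $c_\delta$, so the inequality is automatic. The work is in the range $z > 1$, where I would introduce $\psi(z) := \log(z)/z^\alpha$ and study it via calculus. A direct differentiation gives
\[
\psi'(z) = \frac{1 - \alpha \log z}{z^{\alpha+1}},
\]
so the unique critical point on $(0,\infty)$ is $z^* = e^{1/\alpha}$, and since $\psi(z) \to 0$ both as $z \to \infty$ and as $z \to 1^+$ (with $\psi$ positive in between), this critical point is the global maximum on $z \ge 1$.

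Evaluating at $z^*$ gives the explicit constant
\[
\psi(z^*) = \frac{1/\alpha}{(e^{1/\alpha})^{\alpha}} = \frac{1}{\alpha e} = \frac{1+\delta}{\delta\, e},
\]
so the choice $c_\delta = \frac{1+\delta}{\delta e}$ works for all $z > 0$ (using the trivial bound on $z \le 1$ together with the maximum analysis on $z \ge 1$). This is a closed form, not just an existence statement, which seems worth recording.

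For the numerical claim with $\delta = 0.1$, I would simply plug in: $c_\delta = \frac{1.1}{0.1\, e} = \frac{11}{e} \approx 4.0464$, which is at most $4.05$. There is no real obstacle in the proof; the only thing to be slightly careful about is checking that the interior critical point $e^{1/\alpha}$ indeed gives a global maximum on $(0,\infty)$ rather than a local one, which follows from the sign change of $\psi'$ (positive for $z < e^{1/\alpha}$ and negative afterward) together with $\psi(1)=0$ and $\psi(z) \to 0$ as $z \to \infty$.
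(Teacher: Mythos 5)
Your proof is correct, and for the general statement it is genuinely different from (and sharper than) the paper's argument. The paper proves existence of $c_\delta$ non-constructively: it observes that $z^{\delta/(1+\delta)}/\log(z)\to\infty$, infers a constant valid for large $z$, patches in a second constant on a compact range, and only afterwards verifies the specific value $4.05$ for $\delta=0.1$ by a separate calculus computation on the difference $\phi(z)=4.05\,z^{1/11}-\log(z)$. You instead maximize the ratio $\psi(z)=\log(z)/z^{\alpha}$ directly, which yields the closed-form optimal constant $c_\delta=\frac{1+\delta}{\delta e}$ for every $\delta$; the numerical claim then falls out as $11/e\approx 4.0464\le 4.05$ rather than requiring an independent verification. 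The underlying calculus is the same in spirit as the paper's second step (locate the unique interior critical point, check the boundary behavior), but your version unifies the two halves of the lemma and records the sharp constant, which is strictly more information. All the details check out: $\psi'(z)=z^{-\alpha-1}(1-\alpha\log z)$, the sign change at $z^*=e^{1/\alpha}$ together with $\psi(1)=0$ and $\psi(z)\to 0$ as $z\to\infty$ makes $z^*$ the global maximum on $z\ge 1$, and the range $0<z\le 1$ is trivial since $\log(z)\le 0$.
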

\begin{proof}
Because \[\lim_{z \rightarrow \infty} \frac{z^{\frac{\delta}{1+\delta}}}{\log(z)} = \infty,\]
there exist a $a_1 > 0$ and $z_0 \geq 0$ such that $ \log(z) \leq a_1 z^{\frac{\delta}{1+\delta}}$ for $z \geq z_0$. When $z_0 > 1$, another constant $a_2$ exists such that $\log(z) \leq a_2 z^{\frac{\delta}{1+\delta}}$ for $1 \leq z \leq z_0$, and $c_\delta$ can be assigned $\max\{a_1, a_2\}$. Otherwise $c_\delta = a_1$ is sufficient.

Now let $\delta = 0.1$. We show the minimum of $\phi(z) := 4.05(z)^{1/11} - \log(z)$ is positive. Notice that $\phi(z) \rightarrow \infty$ as $z \rightarrow \infty$ or as $z \rightarrow 0$. The only zero of $\phi'(z)$ occurs at $z^* = \left(\frac{11}{4.05}\right)^{11}$. Finally, $\phi(z^*) > 0$. \qed

\end{proof}

%
%
%

\begin{proof}\textit{of Theorem \ref{theorem:continuous-lkuk-bounds}, part 4:}

First let $k \geq d(\frac{U}{M\mathcal{L}} -1)$. This ensures the formula for $U_k$ holds because \[k \geq d\left(\frac{U}{M\mathcal{L}}-1\right) \geq d\left(\frac{\fmax}{M\mathcal{L}}-1\right).\] In terms of an algorithmic implementation, other upper bounds for $d\left(\frac{\fmax}{M\mathcal{L}}-1\right)$ can be used; for instance, because $\fmax - \fmin \leq M\mathcal{L}$, $d\left(\frac{\fmax}{M\mathcal{L}}-1\right) \leq \frac{d\fmin}{M\mathcal{L}} \leq \frac{df(x_0)}{M\mathcal{L}}$ where $x_0$ is any point in the domain. 

Note that $\frac{1}{d+k}-\frac{1}{k} = \frac{1}{k} (-\gamma_k)$, then with $U_k$ and $L_k$ as defined above, we have the chain of inequalities

\begin{align*}
U_k - L_k  &= \left(  \left(\frac{\int_\PP f^k \d x}{\vol(\PP)}\right)^{\frac{1}{d+k}} (M\mathcal{L})^{\gamma_k}\frac{1}{\gamma_k^{\gamma_k}} \frac{1}{(1-\gamma_k)^{1-\gamma_k}} \right) - \left(\frac{\int_\PP f^k \d x}{\vol(\PP)}\right)^{1/k} \\
& = \left(\frac{\int_\PP f^k \d x}{\vol(\PP)}\right)^{1/k} \left(  \left(\frac{\int_\PP f^k \d x}{\vol(\PP)}\right)^{\frac{1}{k}(-\gamma_k)} (M\mathcal{L})^{\gamma_k}\frac{1}{\gamma_k^{\gamma_k}} \frac{1}{(1-\gamma_k)^{1-\gamma_k}}   - 1\right)  \\
& \leq \fmax \left(  \left(\frac{\int_\PP f^k \d x}{\vol(\PP)}\right)^{\frac{1}{k}(-\gamma_k)} (M\mathcal{L})^{\gamma_k}\frac{1}{\gamma_k^{\gamma_k}} \frac{1}{(1-\gamma_k)^{1-\gamma_k}}   - 1\right)  \\
&\leq \fmax \left(  U^{-\gamma_k} (M\mathcal{L})^{\gamma_k}\frac{1}{\gamma_k^{\gamma_k}} \frac{1}{(1-\gamma_k)^{1-\gamma_k}}   - 1\right) \\
& = \fmax \left(  \left(\frac{M \mathcal{L}}{U}\right)^{\gamma_k}\frac{1}{\gamma_k^{\gamma_k}} \frac{1}{(1-\gamma_k)^{1-\gamma_k}}   - 1\right).
\end{align*}

Let $\epsilon > 0$. We now show how to pick $k$ large enough so that $\left(\frac{M \mathcal{L}}{U}\right)^{\gamma_k}\frac{1}{\gamma_k^{\gamma_k}} \frac{1}{(1-\gamma_k)^{1-\gamma_k}}  \leq 1+\epsilon$.  We divide this into showing three inequalities:

\begin{align}
\frac{1}{(1-\gamma_k)^{1-\gamma_k}}  \leq (1+\epsilon)^{\frac{1}{3}} \label{ineq:1MinusGamma} \\
\left(\frac{M \mathcal{L}}{U}\right)^{\gamma_k} \leq (1+\epsilon)^{\frac{1}{3}} \label{ineq:uml}\\ 
\frac{1}{\gamma_k^{\gamma_k}} \leq (1+\epsilon)^{\frac{1}{3}} \label{ineq:gammagamma}.
\end{align}

If $k \geq \frac{d}{(\epsilon + 1)^{1/3} -1} $, then after rearranging this we have
\[ (\epsilon + 1)^{1/3} \geq  \frac{d}{k} + 1 = \frac{d+k}{k} = \frac{1}{1 - \gamma_k} \geq \frac{1}{(1 - \gamma_k)^{1 - \gamma_k}}, \]
and so Inequality \eqref{ineq:1MinusGamma} follows.

If $k \geq 3d \log\left(\frac{M \mathcal{L}}{U}\right)\left( 1 + \frac{1}{\epsilon}\right)$, then using  Lemma \ref{lemma:loge} and the fact that $\log(\epsilon+1) > 0$, we have
\[ k \geq 3d \log\left(\frac{M \mathcal{L}}{U}\right)\left( 1 + \frac{1}{\epsilon}\right) \geq \frac{3d \log\left(\frac{M \mathcal{L}}{U}\right)}{\log(\epsilon + 1)} \geq \frac{d[ \log\left(\frac{M \mathcal{L}}{U}\right) - \log(\epsilon + 1)/3]}{\log(\epsilon + 1)/3}. \]
Further rearranging this yields $ \frac{\log(\epsilon + 1)}{3} \geq \frac{d}{d+k}\log\left(\frac{M \mathcal{L}}{U}\right),$ and Inequality \eqref{ineq:uml} now follows.

Notice that Lemma \ref{lemma:logdelta} implies 
\begin{equation}
\log(\gamma_k^{-\gamma_k}) = \gamma_k \log(\frac{1}{\gamma_k}) \leq \gamma_k \frac{c_\delta}{\gamma_k^{\frac{\delta}{1+\delta}}} = c_\delta \gamma_k^{\frac{1}{1+\delta}}. \label{ineq:applylemmalogdelta}
\end{equation}
Inequality \eqref{ineq:applylemmalogdelta} shows that it is sufficient to choose $\gamma_k$ such that \begin{equation}
c_\delta \gamma_k^{\frac{1}{1+\delta}} \leq \frac{1}{3}(\log(1+\epsilon)) \label{ineq:solveforgamma}
\end{equation} for Inequality \eqref{ineq:gammagamma} to  hold. Solving for $k$ in Inequality \eqref{ineq:solveforgamma} produces  $k \geq d\left( (3c_\delta)^{1+\delta}  \frac{1}{\log(1+\epsilon)^{1+\delta}} - 1\right)$. Applying Lemma \ref{lemma:loge} shows that 
$k \geq d\left( (3c_\delta)^{1+\delta} \left( 1 + \frac{1}{\epsilon}\right)^{1+\delta} - 1\right)$ is also sufficient for Inequality \eqref{ineq:gammagamma} to hold.

Therefore let
\begin{equation}
\label{eq:what-k-should-be-given-e}
 k = \left \lceil{\max\left\{d\left(\frac{U}{M\mathcal{L}}-1\right), \frac{d}{(\epsilon + 1)^{1/3} -1}, 3d \log\left(\frac{M \mathcal{L}}{U}\right)\left( 1 + \frac{1}{\epsilon}\right), O\left(\left(1 + \frac{1}{\epsilon}\right)^{1+\delta}\right)\right\}}\right \rceil,
\end{equation} 
 where the last term is understood to mean for every $\delta > 0$, there exist a $c_\delta > 0$ so that $k$ should be larger than $d\left( (3c_\delta)^{1+\delta} \left( 1 + \frac{1}{\epsilon}\right)^{1+\delta} - 1\right)$. In particular, if $\delta = 0.1$, then $c_\delta$ could be $4.05$. Notice that $\frac{1}{(\epsilon + 1)^{1/3} -1} = \frac{1}{\epsilon} + 1 + O(\epsilon)$, so $k$ is bounded by a polynomial in $1/\epsilon$. For this choice of $k$, Inequalities \eqref{ineq:1MinusGamma}, \eqref{ineq:uml}, \eqref{ineq:gammagamma} are all true and $U_k - L_k \leq \fmax \cdot \epsilon$.

$L_k$ is a $(1-\epsilon)$--approximation to $\fmax$ because
\[ \fmax \leq U_k = L_k + (U_k - L_k) \leq L_k +\epsilon \fmax \]
and $U_k$ is a $(1+\epsilon)$--approximation to $\fmax$ because

\[ U_k - \epsilon \fmax \leq U_k + (L_k - U_k) = L_k \leq \fmax.\]
\qed
\end{proof}

Before closing this section, we make three remarks. First, Theorem \ref{theorem:continuous-lkuk-bounds} could also be applied for global \emph{unconstrained} maximization. If a polynomial has a global maximum, then there is a $\mathcal{M} > 0$ such that a global maximizer is contained in the box $\PP := [-\mathcal{M}, \mathcal{M}]^d$.  In fact, the theory of quantifier elimination gives worst-case bounds for representatives of each connected component of the set of global maximizers. These bounds $\mathcal{M}$ are effectively computable from the encoding sizes of the input, and when the dimension $d$ is fixed, $\log \mathcal{M}$ is bounded polynomially;  see Proposition 1.3 in \cite{renegar1992computational} or \cite{vorobjov1984bounds}. In actual applications, tighter bounds may be available.

Second, Theorem \ref{theorem:continuous-lkuk-bounds} gives an approximation to $\max f(x)$ on $\PP$, it does not find a point in the domain that has this value. However, Theorem \ref{theorem:continuous-lkuk-bounds} can also be used to approximate the point in $\PP$ that gives $\max f(x)$ by using a standard bisection method.

Third, the bounds $L_k$ and $U_k$ are also valid for when $\PP$ is a full dimensional convex set, not just a polytope. The difficulty in using Theorem \ref{theorem:continuous-lkuk-bounds} on a convex set is the need to integrate a polynomial over it. 


\subsection{Examples and comparisons with other approximations} \label{example+comparison}

%
%

We now illustrate our bounds on an example. Let $f(x_1,x_2) := -5(x_1^2-2)^2 - 7(x_2^2-2)^2 + 20$ and set the domain $\PP$ to be the triangle with vertices $(1,1)$, $(1,2)$, and $(2,1)$. On this domain, $\fmin = 8$ at $(1,1)$, and $\fmax = 20$ at $(\sqrt{2}, \sqrt{2})$. For the Lipschitz constant, we could maximize $\norm{\nabla f(x)}_1$ on $\PP$ which for this problem is $132$.  The  bound produced by Lemma \ref{lemma:lemma_11} gives $\mathcal{L} = 28\cdot 4 \cdot 4 \cdot 2^3 = 3584$. However, we can apply Lemma \ref{lemma:lemma_11} on each monomial of $f(x)$ and get a smaller Lipschitz constant $\mathcal{L} = 536$, which we use. Using the definition of $L_k$ and $U_k$ from Theorem \ref{theorem:continuous-lkuk-bounds}, we have $L_k \leq \fmax \leq U_k$
where
$d=2, M = 1$, $\gamma_k = \frac{2}{2+k}$, and $\mathcal{L} = 536$. Then for different values of $k$, the bounds for $L_k$ and $U_k$ are shown in the first three columns of Table \ref{table:Lk-Uk_Lkm-summation}. Next we want to apply Theorem \ref{theorem:continuous-lkuk-bounds}, part 4 when $\epsilon=0.1$. Using $U = 27$, $\delta=0.1$, we see that $k$ from Equation \eqref{eq:what-k-should-be-given-e} has to be at least 
$ k = \lceil \max\{-1.9, 62.0, 197.2,  217.1\}\rceil$ to guarantee that $U_k - L_k \leq \fmax \cdot \epsilon = 2.0$. However for this example $k=179$ suffices: $U_{179} - L_{179} <~1.998$.

We also want to compare the bounds in Theorem \ref{theorem:continuous-lkuk-bounds} with the grid-summation approximation ideas in \cite{deloera-hemmecke-koeppe-weismantel:mixedintpoly-fixeddim} restricted to the pure continuous case. Using a grid-summation method has been done before, for instance in \cite{deKlerk2006210}, but there generating functions were not used. Note that \cite{deloera-hemmecke-koeppe-weismantel:mixedintpoly-fixeddim} only gives an algorithm for lower bounding $\fmax$ while we also have the upper bound $U_k$. Let $m \in \N$ and define
\[ S(m) := \sum_{x \in \PP \cap \frac{1}{m} \Z^d} f(x)^k, \text{ and } L_{k,m} := \sqrt[k]{ \frac{S(m)}{|\PP \cap \frac{1}{m} \Z^d|}}. \]
We immediately have that $L_{k,m} \leq \max_{x \in \PP \cap \frac{1}{m} \Z^d} f(x) \leq \fmax$, and so $L_{k,m}$ is a valid lower bound for any $k$ and $m$. Keeping $k$ fixed and taking $m \rightarrow \infty$ reveals that $L_{k,m}$ is an approximation to $L_k$:

\[\lim_{m \rightarrow \infty} L^k_{k,m} = \lim_{m \rightarrow \infty} \frac{S(m)}{m^d} \cdot \frac{m^d}{|\PP \cap \frac{1}{m} \Z^d|} = \int_\PP f(x)^k \d x \cdot \frac{1}{\vol(\PP)} = L_k^k.  \]
Table \ref{table:Lk-Uk_Lkm-summation} shows this convergence for $L_{k,m}$.

\begin{table}
\centering
\caption{Values for $U_k$, $L_k$, and $L_{k,m}$ for $k=10, 20, 30, 40$ and $m=10, 100, 1000$ when $\PP$ is the triangle with vertices $(1,1)$, $(1,2)$, and $(2,1)$.}
\label{table:Lk-Uk_Lkm-summation}
\begin{tabular}{ c  c c c c c }
	\toprule
     &       &       & \multicolumn{3}{c}{$L_{k,m}$} \\  \cmidrule{4-6}
  $k$& $L_k$ & $U_k$ &  $10$ & $100$  & $1000$\\ \midrule
  10 & 17.12 & 47.69 & 16.77 & 17.08  & 17.11  \\
  20 & 17.99 & 33.18 & 17.77 & 17.95  & 17.99  \\
  30 & 18.40 & 28.70 & 18.25 & 18.39  & 18.40  \\
  40 & 18.67 & 26.52 & 18.55 & 18.65  & 18.66  \\  \bottomrule  
\end{tabular}
\end{table}

One problem with using $L_{k,m}$ is that a choice of $m$ has to be made. The choice for $m$ can affect how $L_{k,m}$ converges to $L_k$. To see this, consider a new example where $\PP$ is the one dimensional polytope $[-\frac{1}{4}, \frac{1}{4}]$ and the polynomial is $f(x):= -10x^2 + 2$. Table \ref{table:Lkm-summation-mod-4} shows $L_{1,m}$ for some values of $m$, where the columns are the residue classes of $m$ modulo $4$. The value of $L_{1}$ in this case is $1.791667$. We can see that $L_{1,m}$ is converging to $L_1$ from below in residue classes $0$ and $1$, while it is converging from above in the other classes. Also, the convergence in residue class $2$ is much faster than in the others. This faster convergence can be explained. The Ehrhart quasi-polynomial that counts  $|\PP \cap \frac{1}{m} \Z^d|$ is $\frac{1}{2}m  - \frac{1}{2} (m \bmod 4) +1$. Then \[L_{1,m} = \frac{S(m)}{m} \cdot \frac{m}{\frac{1}{2}m  - \frac{1}{2} (m \bmod 4) +1}.\] Two special things happens when $m \bmod 4 = 2$. First, $\frac{m}{\frac{1}{2}m  - \frac{1}{2} (m \bmod 4) +1} = \frac{1}{\vol(\PP)}$, and then the sum $S(m)/m$ is exactly the Midpoint rule of numerical integration where each interval width is $1/m$. 


\begin{table}
\centering
\caption{$L_{1,m}$ for $f(x)= -10x^2 + 2$ on $\PP = [-\frac{1}{4}, \frac{1}{4}]$  arranged by the residue classes of $m$ modulo $4$ where $m \in \{1, 2, \dots, 15\} \cup \{988, 989, \dots, 999\}$. Bold digits show the correct digits to the limit $1.791667\dots$.}
\label{table:Lkm-summation-mod-4}
\begin{tabular}{@{}rlrlrlrl@{}} \toprule
Residue  0             & $L_{1,m}$         & Residue 1 & $L_{1,m}$         & Residue 2 & $L_{1,m}$         & Residue 3 & $L_{1,m}$         \\ \midrule
                        &          & 1           & 2.000000 & 2           & 2.000000 & 3           & 2.000000 \\
4                       & \textbf{1}.583333 & 5           & \textbf{1.7}33333 & 6           & \textbf{1}.814815 & 7           & \textbf{1}.863946 \\
8                       & \textbf{1}.687500 & 9           & \textbf{1.7}53086 & 10          & \textbf{1}.800000 & 11          & \textbf{1}.834711 \\
12                      & \textbf{1.7}22222 & 13          & \textbf{1.7}63314 & 14          & \textbf{1.79}5918 & 15          & \textbf{1}.822222 \\ \midrule
988                     & \textbf{1.79}0823 & 989         & \textbf{1.791}246 & 990         & \textbf{1.79166}8 & 991         & \textbf{1.79}2088 \\
992                     & \textbf{1.79}0827 & 993         & \textbf{1.791}248 & 994         & \textbf{1.79166}8 & 995         & \textbf{1.79}2086 \\
996                     & \textbf{1.79}0830 & 997         & \textbf{1.791}249 & 998         & \textbf{1.79166}8 & 999         & \textbf{1.79}2084 \\ \bottomrule
\end{tabular}
\end{table}

In summary, there are a few key differences between using integration and grid-summation for optimizing a polynomial over a continuous domain. First, our integration methods also give upper bounds $U_k$, while the grid-summation can only produce lower bounds. Second, $L_{k,m}$ converges to $L_k$ as $ m\rightarrow \infty$. Third, $m$ should be picked large enough to ensure that the optimal point in $\PP$ is close to a lattice point in $\PP \cap \frac{1}{m} \Z^d$ otherwise the sum $S(m)$ could only contain $f$ values that are far from the maximum $f$ value. The integration bounds have no analogous parameter. Fourth, and  most important for practical use, computing the sum $S(m)$ using generating functions required decomposing a polytope into unimodular cones, while the integration bounds do not need this. Decomposing to unimodular cones can significantly increase the number of cones that need processing.

\section{Decomposing a polynomial using Handelman decompositions}
\label{sec:handelman-generating-function}

From Theorem \ref{theorem:continuous-lkuk-bounds}, we see that it is necessary to exactly compute the integral $\int_{\PP} f(x)^k \d x$. Integrating a polynomial over a polytope by using efficient generating function methods has been done in  \cite{baldoni-berline-deloera-koeppe-vergne:integration,deloera:software-exact-integration-polynomials}. However, their  polynomial time algorithm for integrating $\int_{\PP} f(x)^k \d x$ relies on decomposing $h(x) := f(x)^k$ into a sum of powers of linear forms in the form $h(x) = \sum_i c_i \ll \ell_i, x\rr^{m_i}$ where $\ell_i \in \Q^d$, $c_i \in \Q$, and $m_i \in \N$. This was done by using the following formula on each monomial of $h$. If $x^{m}= x_1^{m_1}x_2^{m_2}\cdots x_d^{m_d}$, then

\begin{multline} 
x^{m}  = \frac{1}{|m|!} \sum_{0\leq p_i\leq m_i}(-1)^{|m|-(p_1+\cdots+p_d)}
  \binom{m_1}{p_1}\cdots \binom{m_d}{p_d}(p_1 x_1+\cdots+p_d x_d)^{|m|},
  \label{eq:decomp-powerlinform}
\end{multline}
where $|m| = m_1+\cdots+m_d$. Other decompositions into powers of linear forms could be done, for instance \cite{Carlini20125} describes a method with fewer terms but involves roots of unity. 

It is worth noting that Equation \eqref{eq:decomp-powerlinform} does \emph{not} yield an optimal decomposition. The problem of finding a decomposition with the smallest possible number of summands is known as the \emph{polynomial Waring problem} \cite{alexanderhirschowitz,brambillaottaviani,Carlini20125}. A key benefit of Equation \eqref{eq:decomp-powerlinform} is that it is explicit, is computable over $\Q$, and is sufficient for generating a polynomial-time algorithm on when the dimension is fixed \cite{baldoni-berline-deloera-koeppe-vergne:integration}. However, Equation \eqref{eq:decomp-powerlinform} has a major problem: it can produce many terms. For example, a monomial in $10$ variables and total degree $40$ can have more than $4$ million terms. To approximate optimization problems, we need to raise $f(x)$ to the power $k$, and so even if $f(x)$ has small degree or is sparse, $f(x)^k$ may not be, resulting in many terms from Equation \eqref{eq:decomp-powerlinform}. The large number of summands is our motivation for exploring an alternative decomposition using Handelman decompositions.

\subsection{Computing a Handelman decomposition}
Our goal is to use Handelman decompositions for integration and give empirical evidence that it can be superior to the power of linear form decomposition in that it can have fewer terms. Recall that Handelman's Theorem says  if $g_1,\dots,g_n\in\oR[x_1, \dots, x_d]$ are linear polynomials defining a polytope $\PP=\{x\in\R^d:g_1(x)\ge0,\dots,g_n(x)\ge0\}, $ then any polynomial $f\in\oR[x_1, \dots, x_d]$, strictly positive on $\PP$, can be written as the sum $f(x) = \sum_{\alpha\in\oN^n} c_\alpha g_1^{\alpha_1}\cdots g_n^{\alpha_n}$ for some nonnegative scalars $ c_{\alpha}$. We are interested in finding \emph{sparse Handelman decompositions}, with as few summands as possible.

If $f(x) = \sum_{\alpha\in\N^n} c_\alpha g_1^{\alpha_1}\cdots g_n^{\alpha_n}$, we say the \emph{degree} of a Handelman decomposition is $t := \max |\alpha|$, where the maximum is taken over all the exponent vectors $\alpha$ of $g_i(x)$ that appear in the decomposition. We call $c_\alpha g_1^{\alpha_1}\cdots g_n^{\alpha_n}$ a \emph{Handelman monomial}.  

Handelman decompositions have been used before to maximize a polynomial over $\PP$ (see \cite{laurentsurvey,monique2014,ParriloSturmfels2003,sankaranarayanan2013lyapunov}). 
Those methods require finding a Handelman decomposition of large Handelman degree $t$, which becomes more and more difficult as the degree of $t$ increases. Instead, our approach will prescribe a small $t$ value, and then search for a Handelman decomposition of degree $t$ for use in combination with Theorem \ref{theorem:continuous-lkuk-bounds}. The  key task we undertake can be summarized as follows: Given $t$, the goal is to find $c_\alpha \geq 0$ and a $s \in \R$ such that 
\[f(x) + s = \sum_{\alpha\in \Z_{\geq 0}^n \;:\; |\alpha|\le t} c_{\alpha} g^\alpha.\]
In fact, we add the unknown constant shift $s$ to $f(x)$ for three important reasons.

\begin{enumerate}
\item $-s$ will be a lower bound for $\fmin$. So even if $f$ is negative on $\PP$, $f(x)+s$ will be positive on $\PP$, allowing us to use Theorem \ref{theorem:continuous-lkuk-bounds}.
\item If the minimum value of $f$ is zero, $f$ might not have a Handelman decomposition for any $t$. By adding a shift, a decomposition is guaranteed to exist for some $t$. 

\item If the minimum value of $f$ is positive, but small, $f(x)$ might only have Handelman decompositions for large degree $t$. By adding a shift, we can find a Handelman decomposition of smaller size.
\end{enumerate} 

If we expand the right hand side of $f(x) + s = \sum_{|\alpha| \leq t} c_\alpha g^\alpha$ into monomials, then comparing the coefficients of monomials on both sides gives a linear system in the $c_\alpha$ and $s$. Hence we seek a solution to the \emph{linear program}
	\begin{align}
	\label{eq:sparse-lp-handelman}
	\min & \;s + \sum_{|\alpha| \leq t} c_\alpha\\ \nonumber
	& A_Hc_\alpha = b \\ 	\nonumber
	& a_0^Tc_\alpha -s = 0\\ \nonumber
	& s \text{ free}, c_\alpha \geq 0, 
	\end{align}
where the objective has been chosen so that $-s$ is close to $f_{\mathrm{min}}$ and to force a sparse Handelman decomposition of order $t$. It is common practice to use $\norm{\cdot}_1$ as a proxy for sparse solutions \cite{elad2010sparse}. Notice that $A_H$ has ${t+ n\choose n} = O(t^{n})$ columns if the number of facets $n$ is fixed, and $A_H$ has ${t + d \choose d} = O(t^d)$ rows if the dimension $d$ is fixed. 

Consider the example $f(x) = x^2 -x$ on $[-1,1] \subset \R$, which cannot have a Handelman decomposition as it takes negative values. 
We seek a solution to \[f(x) + s = c_{2,0} (x + 1)^2 + c_{1,1} (1 - x) (x + 1) + c_{0,2}(1 - x)^2 + c_{1,0} (x + 1) + c_{0,1}(1 - x).\]
Solving the linear program results in $c_{0,2} = 3/4$, $c_{2,0}=1/4$, and $s=1$.

In the remainder of this section, we will show that if $t$ is fixed to a certain value, then a large enough shift $s$ exists so that $f(x)+s$ has a degree $t$ Handelman decomposition. We first need some notation. Let $\Delta_k$ be the simplex $\{x \in \R^k \mid x_i \geq 0, \sum x_i = 1\}$, and for a polynomial in the form $h(x) = \sum_\alpha a_\alpha x^\alpha$, let $L(h) := \max\{ |a_\alpha| \frac{\alpha_1! \cdots \alpha_k!}{(\alpha_1+\cdots+\alpha_k)!} \}$. The next lemma will allow us to state a useful degree bound on a Handelman decomposition. 

\begin{lemma}[Theorem 1 in \cite{powers2001new}] 
\label{lemma:PowersPolya}
Suppose that $h(x) \in \R[x_1, \dots, x_d]$ is a homogeneous polynomial of degree $D$ and is strictly positive on $\Delta_d$. Let $\lambda$ be the minimum of $h$ on $\Delta_d$. If $N > \frac{D(D-1)}{2}\frac{L(h)}{\lambda} -D$, then $(x_1 +\cdots + x_d)^N h(x)$ has positive coefficients.
\end{lemma}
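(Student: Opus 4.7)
The approach is a classical Pólya-type argument: expand $(x_1+\cdots+x_d)^N h(x)$ in the monomial basis, identify each coefficient as a positive prefactor times a falling-factorial sum $S(\beta)$, and then show $S(\beta)>0$ by comparing it to the true value $h(\beta)\ge\lambda(N+D)^D$ modulo an error controlled by $L(h)$.

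First, I would apply the multinomial theorem. Writing $h=\sum_{|\alpha|=D} a_\alpha x^\alpha$, the coefficient of $x^\beta$ in $(x_1+\cdots+x_d)^N h(x)$, for $|\beta|=N+D$, works out to
\begin{equation*}
c_\beta \;=\; \binom{N+D}{\beta}\frac{N!}{(N+D)!}\,S(\beta),\qquad S(\beta)\;:=\;\sum_{|\alpha|=D} a_\alpha\,\beta^{\underline{\alpha}},
\end{equation*}
where $\beta^{\underline{\alpha}}:=\prod_i \beta_i(\beta_i-1)\cdots(\beta_i-\alpha_i+1)$ is the multivariate falling factorial (understood as $0$ if any $\alpha_i>\beta_i$). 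The prefactor is strictly positive, so positivity of $c_\beta$ reduces to showing $S(\beta)>0$ for every $\beta\in\Z_{\ge 0}^d$ with $|\beta|=N+D$.

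Next I would write $S(\beta)=h(\beta)-R(\beta)$ with $R(\beta):=\sum_\alpha a_\alpha\bigl(\beta^\alpha-\beta^{\underline{\alpha}}\bigr)$. By homogeneity, $h(\beta)=(N+D)^D h(\beta/(N+D))\ge\lambda(N+D)^D$, since $\beta/(N+D)\in\Delta_d$. For the error I would chain four estimates: (a) the single-variable bound $\beta_i^{\alpha_i}-\beta_i^{\underline{\alpha_i}}\le\binom{\alpha_i}{2}\beta_i^{\alpha_i-1}$; (b) a telescoping of $\prod_i\beta_i^{\alpha_i}-\prod_i\beta_i^{\underline{\alpha_i}}$ giving $\beta^\alpha-\beta^{\underline{\alpha}}\le\sum_j\binom{\alpha_j}{2}\beta^{\alpha-e_j}$; (c) the definition of $L(h)$, which yields $|a_\alpha|\le L(h)\binom{D}{\alpha}$; and (d) the combinatorial identity $\binom{D}{\alpha}\binom{\alpha_j}{2}=\binom{D}{2}\binom{D-2}{\alpha-2e_j}$. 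Resumming via the multinomial theorem $\sum_{|\gamma|=D-2}\binom{D-2}{\gamma}\beta^\gamma=(\beta_1+\cdots+\beta_d)^{D-2}$ collapses the nested sum to
\begin{equation*}
|R(\beta)|\;\le\; \binom{D}{2}L(h)\,(\beta_1+\cdots+\beta_d)^{D-1}\;=\;\binom{D}{2}L(h)(N+D)^{D-1}.
\end{equation*}

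Combining the two bounds yields $S(\beta)\ge (N+D)^{D-1}\bigl(\lambda(N+D)-\tfrac{D(D-1)}{2}L(h)\bigr)$, which is strictly positive iff $N+D>\tfrac{D(D-1)}{2}\tfrac{L(h)}{\lambda}$, i.e.\ exactly under the stated hypothesis. I expect the main obstacle to be the clean bookkeeping in step (d): one must verify that the double resummation over $\alpha$ and $j$ recovers precisely the sharp constant $\binom{D}{2}$ and not something larger, because this constant is exactly what pins down the stated threshold $\tfrac{D(D-1)}{2}\tfrac{L(h)}{\lambda}-D$.
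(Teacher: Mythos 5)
The paper does not prove this lemma itself---it imports it verbatim as Theorem~1 of Powers--Reznick \cite{powers2001new}---and your argument is a correct reconstruction of exactly that source's proof: the identity $c_\beta=\binom{N+D}{\beta}\frac{N!}{(N+D)!}\sum_\alpha a_\alpha\beta^{\underline{\alpha}}$, the lower bound $h(\beta)\ge\lambda(N+D)^D$ by homogeneity, and the falling-factorial error estimate $|R(\beta)|\le\binom{D}{2}L(h)(N+D)^{D-1}$ (your steps (a)--(d) all check out, with equality in (a) at $\alpha_i=2$ showing the constant $\binom{D}{2}$ is the right one). No gaps; this is essentially the same approach as the cited proof.
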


\begin{theorem}
\label{thm:large-s-given-t}
Let $f(x) \in \R[x_1, \dots, x_d]$ be any polynomial of degree $D$, and $\PP$ be a full dimensional polytope given by the linear polynomials $g_i(x)$ so that $\PP=\{x\in\R^d:g_1(x)\ge0,\dots,g_n(x)\ge0\}$.   Then there exists a large enough shift $s \in \R$ such that the polynomial $f(x)+s$ has a Handelman decomposition of degree $t=D(D-1) + 1$.
\end{theorem}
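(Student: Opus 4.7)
The plan is to reduce the problem to Lemma \ref{lemma:PowersPolya} on the standard simplex $\Delta_n$, after first rescaling the defining inequalities so that $\sum_{i=1}^n g_i \equiv 1$ as a polynomial identity in $x$. Because $\PP$ is bounded and full-dimensional, the outward normals $-a_i$ (from $g_i = a_i^\T x + b_i$) positively span $\R^d$. Applying this to the vector $-\sum_i a_i$ produces coefficients $c_i > 0$ with $\sum_i c_i a_i = 0$, so $\sum_i c_i g_i(x) \equiv K$ for some constant $K$, which must be strictly positive since a full-dimensional polytope contains a point where not all $g_i$ vanish. Replace each $g_i$ by $c_i g_i/K$: this leaves $\PP$ unchanged and only rescales the coefficients of any future Handelman decomposition by positive constants, preserving its degree $\max|\alpha|$. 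Henceforth we may assume $\sum_i g_i \equiv 1$.

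Next I would rewrite $f(x)+s$ as the evaluation of a homogeneous polynomial of degree $D$ in the $g_i$'s. Since the $a_i$'s span $\R^d$ and the constant $1=\sum_i g_i$ lies in their affine span, each coordinate admits a polynomial identity $x_j = \sum_i \mu_{ji}\, g_i(x)$. Any monomial $x^\gamma$ thereby becomes a polynomial of degree $|\gamma|$ in the $g_i$'s, and multiplying by $(\sum_i g_i)^{D-|\gamma|} \equiv 1$ homogenizes it to degree $D$. Summing over the monomials of $f$ and adding $s(\sum_i g_i)^D \equiv s$, we obtain an explicit homogeneous polynomial $H(y_1,\dots,y_n)$ of degree $D$ such that $H(g_1(x),\dots,g_n(x)) = f(x)+s$ holds identically in $x$.

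The crucial estimate is to control the ratio $L(H)/\lambda$ appearing in Lemma \ref{lemma:PowersPolya}. Decompose $H = \tilde f + s(\sum_i y_i)^D$ where $\tilde f$ collects the $s$-independent part. A direct computation gives $L((\sum_i y_i)^D)=1$ since the multinomial coefficients $\binom{D}{\alpha}$ exactly cancel the $\alpha!/D!$ normalization in the definition of $L$. By subadditivity of $L$, $L(H) \leq L(\tilde f) + s$. On $\Delta_n$, $(\sum_i y_i)^D = 1$, so $\lambda := \min_{\Delta_n} H = \min_{\Delta_n}\tilde f + s$. Both numerator and denominator grow linearly in $s$, so $L(H)/\lambda \to 1$ as $s\to\infty$; choosing $s > \max\{-\min_{\Delta_n}\tilde f,\; L(\tilde f) - 2\min_{\Delta_n}\tilde f\}$ simultaneously guarantees $\lambda > 0$ and $L(H)/\lambda \leq 2$.

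For such $s$ the choice $N = (D-1)^2$ satisfies the hypothesis $N > \tfrac{D(D-1)}{2}\cdot 2 - D = D^2-2D$ of Lemma \ref{lemma:PowersPolya}, yielding a polynomial identity $(\sum_i y_i)^N H(y) = \sum_{|\beta|=N+D} c_\beta\, y^\beta$ with all $c_\beta \geq 0$. Substituting $y_i = g_i(x)$ and using $\sum_i g_i \equiv 1$ collapses the left side to $H(g(x)) = f(x)+s$, producing $f(x)+s = \sum_{|\beta|=N+D} c_\beta\, g^\beta(x)$, a Handelman decomposition of degree $N+D = (D-1)^2 + D = D(D-1)+1$. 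The main obstacle is Step 1: establishing that the $g_i$'s can be rescaled so that their sum becomes the polynomial constant $1$ rather than merely a function constant on $\PP$. Once this polynomial identity is in hand, the homogenization of Step 2 and the Powers--Reznick bound applied on $\Delta_n$ fit together cleanly to deliver the stated degree $D(D-1)+1$.
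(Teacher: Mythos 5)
Your proposal is correct and follows essentially the same route as the paper's proof: normalize the $g_i$ so that $\sum_i g_i \equiv 1$, express $f+s$ as a homogeneous degree-$D$ polynomial in the $g_i$, apply the Powers--Reznick bound (Lemma \ref{lemma:PowersPolya}) on $\Delta_n$, and force $L/\lambda \le 2$ by taking $s$ large, giving $N=(D-1)^2$ and total degree $D(D-1)+1$. The only deviations are cosmetic refinements: you derive the normalization $\sum_i c_i g_i \equiv K>0$ directly from the boundedness of $\PP$ where the paper cites Handelman, and your subadditivity bound $L(H)\le L(\tilde f)+s$ is a slightly cleaner justification of $L/\lambda\to 1$ than the paper's coefficient-by-coefficient argument.
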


\begin{proof}
Our strategy is to follow the proof of Theorem 3 in \cite{powers2001new} and simplify the result of Theorem 5 in \cite{powers2001new}. However, we do not use their results 
directly.

By \cite{handelman1988} there exist a $c \in \R^n_{\geq 0}$ such that $\sum_i c_i g_i = 1$. Replace $c_ig_i(x)$ by $g_i(x)$ so that $\sum_i g_i(x) = 1$. Also, there exist constants $b_{j,i} \in \R$ so that for $j=1, \dots, d$, $x_j = \sum_{i=1}^n b_{j, i} g_i(x)$. Finding the $b_{j,i}$'s can be done by solving a linear system. Let $B$ be the real $d \times n$ matrix giving \[B \cdot (g_1(x), \dots, g_n(x))^T = (x_1, \dots, x_d).\] 

Define the map $\phi:\R[y_1, \dots, y_n] \rightarrow \R[x_1, \dots, x_d]$ by $y_i \mapsto g_i(x)$. If $f(x)$ has the form $f(x) = \sum_{|\alpha| \leq D} a_\alpha x^\alpha$, define the homogeneous polynomial $\widetilde{f}(y) \in \R[y_1, \dots, y_n]$ by \[\widetilde{f}(y) = \sum_{|\alpha| \leq D} a_\alpha (B\cdot (y_1, \dots, y_n)^T)^\alpha \cdot \left(\sum_{i=1}^n y_i\right)^{D - |\alpha|}.\]

If $s \in \R$ is large enough such that $f(B\cdot y)+s > 0$ for $y \in \Delta_n$, then with $\widetilde{h}(y) :=  \widetilde{f}(y) + s\cdot(y_1+\cdots +y_n)^D$, $\widetilde{h}(y) > 0$ for $y \in \Delta_n$. Let $\lambda(\widetilde{h})$ be the minimum of $\widetilde{h}$ on $y \in \Delta_n$. Applying Lemma \ref{lemma:PowersPolya} on $\widetilde{h}$ states that for $N > \frac{D(D-1)}{2}\frac{L(\widetilde{h})}{\lambda(\widetilde{h})} -D$, $(y_1 + \cdots + y_n)^N \cdot \widetilde{h}(y)$ has positive coefficients. Then we have that

\begin{equation}
\label{equ:poly-handelman}
(y_1 + \cdots +y_n)^N \cdot \widetilde{h}(y) = \sum_{|\alpha| = N+D} b_\alpha y_1^{\alpha_1} \cdots y_n^{\alpha_n},
\end{equation}
where $b_\alpha \geq 0$. Applying the map $\phi$ to both sides of Equation \ref{equ:poly-handelman} gives $f(x) + s$ decomposed as a Handelman polynomial of degree $N+D$. This theorem is finished once we show that $s$ can further be made large enough so that $\frac{L(\widetilde{h})}{\lambda(\widetilde{h})} \leq 2$. We do this by showing $\lim_{s \rightarrow \infty} \frac{L(\widetilde{h})}{\lambda(\widetilde{h})}  = 1$.

First note that $\lambda(\widetilde{h}) = \lambda(f)+s$ where $\lambda(f)$ is the minimum of $f(B\cdot y)$ on $y \in \Delta_n$. Notice that every monomial of $\widetilde{h}$ is in the form $(a'_\alpha + s \cdot \frac{D!}{\alpha_1!\cdots \alpha_n!}) y^\alpha$ for some $a'_\alpha \in \R$ and $|\alpha| = D$. Hence $L(\widetilde{h})$ is in the form $a'_\alpha \cdot \frac{\alpha_1!\cdots \alpha_n!}{D!}+ s$. Then $\lim_{s \rightarrow \infty} \frac{L(\widetilde{h})}{\lambda(\widetilde{h})}  = 1$.
\qed
\end{proof}

Note that $f(x)+s$ can have a Handelman decomposition of degree smaller than $D(D-1)+1$. In our experiments, we have always found decompositions of Handelman degree $D$.

\subsection{Benefits of Handelman decompositions}
\label{sec:handelman-example}
Next we compare between decomposing a random polynomial into a sum of powers of a linear form using Equation (\ref{eq:decomp-powerlinform}) and the Handelman method. We constructed a set of random polynomials in dimensions three, four, and five of total degree ranging from three to eight. For each dimension and degree pair, $20$ random polynomials where constructed. Hence we tested 360 polynomials in total. For each polynomial of degree $D$ in $d$ variables, $20\%$ of the possible $\binom{d+D}{d}$ monomials where randomly selected to have nonzero coefficients. The nonzero coefficients where uniformly selected from the set $[-10,10]\cap \Z -\{0\}$. For the Handelman decompositions, the polytope $\PP$ was picked to be the box $[-1, 1]^d$, $t$ was set to $D$, and the objective function enforces a sparse decomposition and minimizes $s$. 

Figure \ref{fig:percent-improvement-handelman-plf} illustrates how much better a Handelman decomposition can be over the power of linear form formula. The figure plots the average percent change between the number of terms in each method, and the bars reflect min and max percent reduction in the number of terms. For example, looking at the degree eight polynomials, the Handelman decompositions had about $40\%$ fewer terms than the power of linear forms formula among the 60 test polynomials ($20$ in dimension three, four, and five, each). Among the 60 test polynomials, the best case had about $50\%$ fewer terms while the worst case has about $30\%$ fewer terms. 

One interesting fact is that there are a few examples in degree three, four, and five where Handelman decompositions had a few more terms than the power of linear form decomposition (resulting in a negative percent improvement). However, Figure \ref{fig:raw-improvement-handelman-plf} reveals that the difference in the number of terms between both methods is small in these dimensions. Hence the benefit of a Handelman decomposition is less important for the low degree polynomials. Figure \ref{fig:percent-improvement-handelman-plf} does not show the degree 3 polynomials because their percent improvement ranged from $-50\%$ to $53\%$, meaning neither method consistently had fewer terms, but the number of terms from both methods is small. However, a Handelman decomposition also discovers a lower bound to shift $f(x)$ to make it nonnegative on $\PP$.

%

\begin{figure}
        \centering
        \begin{subfigure}[b]{0.5\textwidth}
                \includegraphics[width=1.0\textwidth]{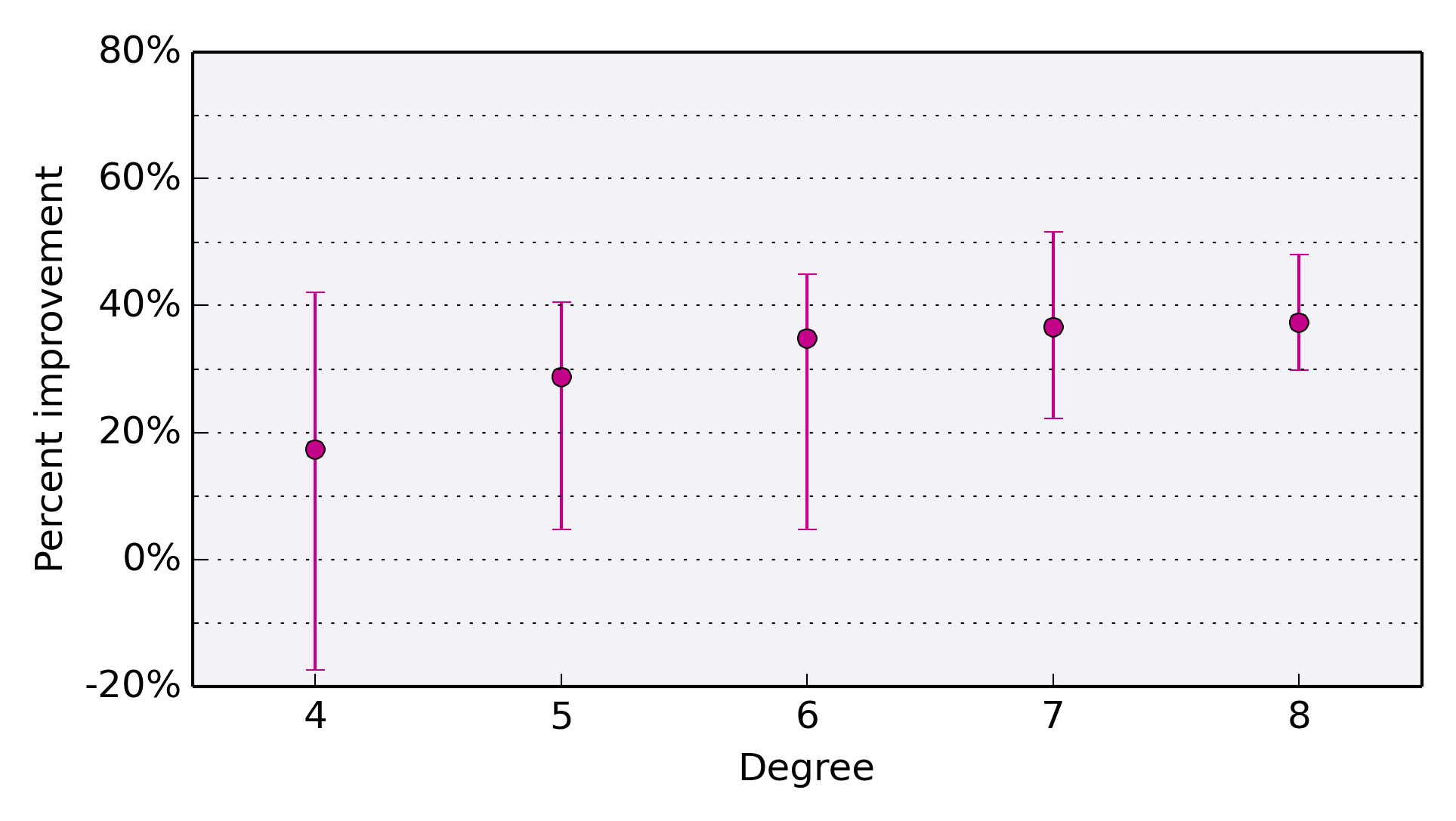}
                \caption{}
                \label{fig:percent-improvement-handelman-plf}
        \end{subfigure}%
        ~ 
        \begin{subfigure}[b]{0.5\textwidth}
                \includegraphics[width=1.0\textwidth]{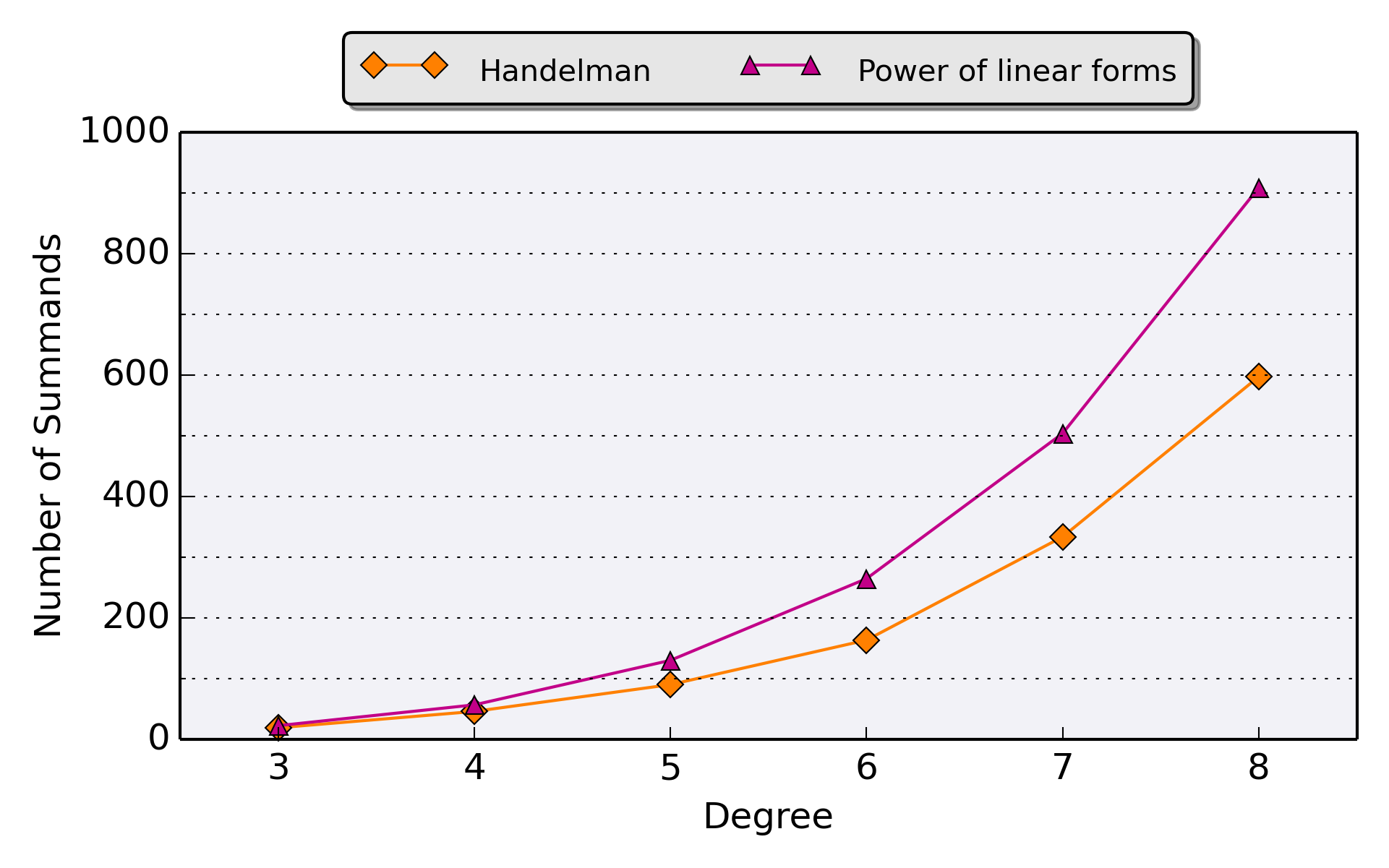}
                \caption{}
                \label{fig:raw-improvement-handelman-plf}
        \end{subfigure}
\caption{(\protect\subref{fig:percent-improvement-handelman-plf})Percent improvement of the number of terms from a Handelman decomposition versus the number of terms from the power of linear form formula. A positive percent means the Handelman decomposition had fewer terms.(\protect\subref{fig:raw-improvement-handelman-plf}) Average number of terms between a Handelman decomposition and power of linear form decomposition. Fewer terms is better.}
\label{fig:handelmanVsPLF}
\end{figure}

%

Next, we consider the effect of including a term in the objective function of Equation \eqref{eq:sparse-lp-handelman} to control the sparsity of solutions. We found that the average percent improvement between the number of linear forms found from our linear program and the number of nonzeros a generic basic feasible solution would have ranged from $20\%$ to $50\%$ for each degree class of polynomials. The densest solution among any test polynomial still had $5\%$ fewer terms than a generic basic solution. Every linear program we tested is degenerate. This shows that it could be worthwhile to build an objective function that controls sparsity. Of course, any objective function that is not just $\min s$ can result in $-s$ being a poor lower bound for $\fmin$. 

In terms of time to compute each decomposition, using Equation \eqref{eq:decomp-powerlinform} is really fast for the polynomials we considered. In fact, no example took longer than $0.1$ seconds to decompose into a sum of powers of linear forms. As Figure \ref{fig:raw-improvement-handelman-plf} shows, this is not surprising as about only a $1000$ terms was ever produced. However, as Table 1 in \cite{deloera:software-exact-integration-polynomials} shows, Equation \eqref{eq:decomp-powerlinform} can contain millions of terms already for just one monomial in dimension $10$ of degree $40$. For the Handelman decomposition, we used \texttt{MAPLE} 18 to build the linear systems and \texttt{SoPlex} 2.0.0 within \texttt{SCIP} 3.1.0 \cite{achterberg2009scip} for solving the linear systems . For polynomials of degree three to five, the average time to find a Handelman decomposition was $5$ seconds, while for the polynomials of degree six to eight, the average was $60$ seconds. The longest Handelman decomposition took $271$ seconds. 

One troubling fact with searching for a Handelman decomposition of order $t$ on a polytope $\PP$ with $n$ facets is that a linear program of size $\binom{t+d}{d} \times \binom{t+n}{n}$ needs to be solved. This linear program quickly becomes impractical for large dimensions or with complicated polytopes. One way to reduce this cost when $n$ is large is to contain $\PP$ within a larger simpler polytope like a box or simplex, or to decompose $\PP$ into simpler polytopes. Another idea is to use row and column generation methods for solving the linear program. These ideas and others will be considered in a future implementation of Handelman decompositions.

\section{A generating function for use with Handelman decompositions}
\label{sec:gen-fun-for-integration}

We expand on the work done in \cite{baldoni-berline-deloera-koeppe-vergne:integration,deloera:software-exact-integration-polynomials} and develop a new polynomial time method for exactly integrating a product of affine functions in polynomial time. This will allow the integration of terms from Handelman decompositions. The next few statements are a review of the types of generating functions that are key, and they are succinctly described in Chapter 8 of \cite{barvinokzurichbook}. We use $\ll \cdot, \cdot \rr$ to denote the standard dot product in $\R^d$.

\begin{proposition}
[\cite{lawrence91-2}, and \cite{zbMATH00148667}]
\label{prop:eValuationInt}
There exists a unique valuation  $I(\;\cdot\;, \ell)$ which  associates  to every polyhedron
$\PP\subset \R^d$ a meromorphic function so that the following properties hold.

\begin{enumerate}
\item If $\ell$ is a linear form such that $e^{\ll \ell, x \rr}$ is integrable over $\PP$, then 
$I(\PP,\ell) = \int_\PP e^{\ll \ell, x \rr} \d x.$
\item For every point $s \in \R^d$, one has
$I(s+\PP, \ell) = e^{\ll \ell, s \rr}I(\PP,\ell).$
\item If $\PP$ contains a line, then $I(\PP, \ell) = 0$.
\end{enumerate}
\end{proposition}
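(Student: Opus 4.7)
The plan is to build $I$ explicitly on simplicial cones (where a direct formula is available), extend to arbitrary polyhedra through the polyhedral indicator algebra via a Brion-type decomposition, and then pin down uniqueness from property (1) combined with meromorphic continuation.

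First I would handle a simplicial cone $C = s + \sum_{i=1}^d \R_{\geq 0} u_i$ with linearly independent rays $u_1,\ldots,u_d$. On the open set of $\ell$ with $\ll \ell, u_i\rr < 0$ for every $i$, the affine substitution $x = s + \sum t_i u_i$ with $t_i \geq 0$ reduces the integral to a product of one-dimensional exponential integrals and yields
\[
\int_C e^{\ll \ell, x\rr}\d x \;=\; \frac{|\det(u_1,\ldots,u_d)|\, e^{\ll \ell, s\rr}}{\prod_{i=1}^d (-\ll \ell, u_i\rr)}.
\]
The right-hand side is a rational function of $\ell$ times an exponential, so it extends meromorphically to all $\ell \in \C^d$, which I would take as the definition of $I(C,\ell)$. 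Property (2) is immediate from the factor $e^{\ll\ell,s\rr}$, and (3) is vacuous for simplicial cones since they contain no line.

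To handle an arbitrary polyhedron $\PP$, I would invoke Brion's decomposition: the indicator function of $\PP$ equals the signed sum of the indicator functions of its supporting cones at vertices, modulo indicators of polyhedra containing a line. Triangulating each pointed supporting cone into simplicial cones, $I(\PP,\ell)$ is defined as the corresponding signed sum of the closed-form contributions above. Property (1) then transfers by additivity of the integral on the common integrable locus, (2) follows because each supporting-cone contribution carries the shift factor $e^{\ll\ell, s\rr}$, and (3) is forced because line-containing pieces are annihilated in the quotient. For uniqueness, any other candidate $J$ must agree with $I$ on each simplicial cone wherever $e^{\ll\ell,x\rr}$ is integrable (by (1)), hence everywhere by meromorphic continuation; the valuation axiom together with Brion's identity then forces $J = I$ on every polyhedron.

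The main obstacle will be the well-definedness of the extension in the second step: two different triangulations or vertex decompositions must yield the same meromorphic function. I expect to argue this by showing that any relation among simplicial-cone indicators in the polyhedral algebra (modulo line-containing cones) corresponds, on the integrable locus of $\ell$, to a literal identity of integrals, which then propagates to the meromorphic extension by analytic continuation. Packaging this cleanly via McMullen's polytope algebra, or through the Lawrence--Varchenko signed decomposition, is what allows the line-containing ideal to be quotiented out without affecting the value of $I$ on the pointed pieces.
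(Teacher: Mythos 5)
The paper does not prove this proposition at all---it is quoted verbatim from Lawrence \cite{lawrence91-2} and Barvinok's book (Chapter~8 of \cite{barvinokzurichbook}), so there is no in-paper argument to compare against. Your outline is in fact the standard proof strategy from those sources: explicit formula on simplicial cones, extension through the polyhedral indicator algebra modulo line-containing polyhedra, uniqueness by analytic continuation. The uniqueness half of your argument is fine.

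The genuine gap is in your well-definedness step, and it is not a packaging issue. You propose to verify that an indicator relation among simplicial cones becomes ``on the integrable locus of $\ell$, a literal identity of integrals,'' and then to continue analytically. But the cones appearing in a Brion-type relation have \emph{pairwise disjoint} convergence domains: for a polytope $\PP$, the integral over the tangent cone at a vertex $v$ converges exactly for $\ell$ in the interior of the negative of the normal cone at $v$, and these normal cones tile $\R^d$ with disjoint interiors, so for a generic $\ell$ exactly one term of the decomposition converges and there is no locus on which the identity of integrals can be checked literally. Already in dimension one, $[\R]=[\R_{\ge 0}]+[\R_{\le 0}]-[\{0\}]$ has no $\ell$ for which both half-line integrals converge, yet the identity $0=\frac{1}{-\ell}+\frac{1}{\ell}-0$ must hold for the valuation to exist. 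For the same reason, property (1) for a general polytope is not ``transferred by additivity'': the left side converges for every $\ell$ while no single $\ell$ makes all the vertex-cone integrals converge, so this instance of (1) is essentially Brion's theorem itself and cannot be an input to the construction. The repair is exactly the device you mention only as an afterthought: the Lawrence--Varchenko polarized decomposition, in which each tangent cone is flipped (with a sign) according to a fixed generic $\ell$ so that all polarized cones share a common convergence domain, the signed indicator identity holds off a measure-zero set, the integrals can be compared literally there, and un-flipping each generator (which multiplies the cone's rational function by $-1$, consistently with the sign in the indicator algebra modulo lines) recovers the general statement by meromorphic continuation one cone at a time. Equivalently one can invoke Barvinok's general extension theorem for valuations defined on cones. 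Either of these must be the main engine of the proof, not a remark; as written, the central step of your argument would fail.
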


The next lemma follows from the Brion--Lasserre--Lawrence--Varchenko decomposition theory of a polyhedron
into the supporting cones at its vertices \cite{beck-haase-sottile:theorema,Brion88,lasserre-volume1983}.

\begin{lemma}
\label{lemma:intTriangulation}
Let $\PP$ be a polyhedron with set of vertices $V(\PP)$. For each
vertex~$s$, let $C_s$ be the cone of feasible directions at vertex $s$. Then
\begin{equation*}
I(\PP,\ell)=\sum_{s\in V(\PP)}I(s+C_s,\ell).
\end{equation*}
\end{lemma}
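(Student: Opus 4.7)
The plan is to derive the lemma directly from the classical Brion--Lawrence--Varchenko decomposition of the indicator function of a polyhedron, combined with the valuation property of $I(\,\cdot\,, \ell)$ recorded in Proposition \ref{prop:eValuationInt}. The whole argument is conceptually short once one accepts the indicator identity; no calculation on $\ell$ is needed.

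First, I would recall (or cite) Brion's identity at the level of indicator functions. For a polyhedron $\PP$ with vertex set $V(\PP)$, one has
\[
[\PP] \;=\; \sum_{s \in V(\PP)} [s + C_s] \;+\; \sum_{i} \varepsilon_i\, [\PP_i],
\]
where each $\PP_i$ is a polyhedron containing an affine line and $\varepsilon_i \in \Z$. This relation is usually obtained from the Brianchon--Gram identity, which expresses $[\PP]$ as an alternating sum of tangent cones over all faces; regrouping the tangent cones by vertex shows that contributions from higher-dimensional faces collapse into indicator functions of polyhedra containing lines. Equivalently, one can use the Lawrence--Varchenko pairing by choosing a generic direction and combining the tangent cones at each vertex.

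Next, I would apply $I(\,\cdot\,, \ell)$ to both sides. Proposition \ref{prop:eValuationInt} asserts that $I$ is a valuation into meromorphic functions, hence it extends linearly to $\Z$-linear combinations of indicator functions of polyhedra. Applying this to the Brion relation gives
\[
I(\PP, \ell) \;=\; \sum_{s \in V(\PP)} I(s + C_s, \ell) \;+\; \sum_{i} \varepsilon_i\, I(\PP_i, \ell).
\]
By part~(3) of Proposition \ref{prop:eValuationInt}, $I(\PP_i, \ell) = 0$ for every polyhedron $\PP_i$ containing a line, so the error sum vanishes and the claimed formula drops out.

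The main obstacle is not combinatorial but analytic: it lies in justifying that the meromorphic extension furnished by Proposition \ref{prop:eValuationInt} respects the indicator relation even though each supporting cone $s + C_s$ is typically unbounded and $e^{\langle \ell, x \rangle}$ is not integrable on it for any real $\ell$. One must interpret each $I(s + C_s, \ell)$ as a meromorphic function of $\ell$ (obtained by analytic continuation from the cone of $\ell$ for which the integral converges), and confirm that the poles contributed by the individual summands cancel in the sum so that the right-hand side reproduces the meromorphic function $I(\PP, \ell)$. This analytic bookkeeping is precisely what is carried out in \cite{beck-haase-sottile:theorema,Brion88,lasserre-volume1983} and in Chapter~8 of \cite{barvinokzurichbook}, and once it is granted the proof reduces to the two algebraic steps above.
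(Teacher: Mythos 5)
Your proof is correct and is essentially the argument the paper relies on: the paper gives no proof of its own, deferring to \cite{beck-haase-sottile:theorema,Brion88,lasserre-volume1983}, and those references proceed exactly as you do --- via the Brianchon--Gram/Brion indicator identity, the valuation property of $I(\,\cdot\,,\ell)$ from Proposition \ref{prop:eValuationInt}, and its vanishing on polyhedra containing lines. (One minor simplification: in the Brianchon--Gram identity the tangent cone at any face of positive dimension already contains a line, so the ``collapsing'' step you describe is not needed before discarding those terms.)
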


The proof of the next two statements can be found in Lemma 8.3 in \cite{barvinokzurichbook}.
\begin{proposition}
\label{prop:eValuationIntCone}
  For a simplicial cone $C$ generated by rays $u_1,u_2,\dots u_d$ (with vertex $0$) and for any point $s$
\begin{equation*}
I(s+C,\ell) = \vol (\Pi_C)e^{\ll \ell, s \rr} \prod_{i=1}^d \frac{1}{-\ll \ell, u_i \rr}
\end{equation*}
where $\Pi_C$ is the fundamental parallelepiped generated by $C$ and $\vol (\Pi_C)$ is the absolute value of the determinant of the matrix containing the vectors $u_i$. These identities holds as a meromorphic function of~$\ell$ 
and pointwise for every $\ell$ such that $\langle \ell, u_i \rangle \neq 0$ for
all $u_i$.
\end{proposition}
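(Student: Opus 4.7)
The plan is to reduce the assertion to the case $s=0$ and then compute the integral by a straightforward change of variables. First I would apply the translation property (item 2 of Proposition \ref{prop:eValuationInt}) to write $I(s+C,\ell) = e^{\ll \ell, s\rr} I(C,\ell)$, so that the exponential factor $e^{\ll \ell, s\rr}$ on the right-hand side is accounted for and it suffices to prove
\[I(C,\ell) = \vol(\Pi_C) \prod_{i=1}^d \frac{1}{-\ll \ell, u_i\rr}.\]

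Next I would restrict to the open set $\Omega := \{\ell : \ll \ell, u_i\rr < 0 \text{ for all } i\}$, where $e^{\ll \ell, x\rr}$ is integrable over $C$ (equivalently, $\ell$ lies in the interior of the polar cone of $C$). On $\Omega$, the linear map $T\colon \R_{\ge 0}^d \to C$, $T(t_1,\dots,t_d) = \sum_{i=1}^d t_i u_i$, is a bijection because the $u_i$ are linearly independent; its Jacobian determinant has absolute value $|\det[u_1\;\cdots\;u_d]| = \vol(\Pi_C)$. By the change of variables formula and Fubini's theorem,
\[\int_C e^{\ll \ell, x\rr}\,\d x = \vol(\Pi_C)\int_0^\infty\!\!\cdots\!\int_0^\infty \exp\Bigl(\sum_{i=1}^d t_i\ll \ell, u_i\rr\Bigr)\,\d t_1\cdots\d t_d = \vol(\Pi_C)\prod_{i=1}^d \frac{1}{-\ll \ell, u_i\rr},\]
which establishes the identity on $\Omega$.

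Finally I would extend this identity to all $\ell$ with $\ll \ell, u_i\rr \neq 0$. The right-hand side is manifestly a rational function of $\ell$ with poles only along the hyperplanes $\ll \ell, u_i\rr = 0$, so it is meromorphic on $\C^d$. By Proposition \ref{prop:eValuationInt}, the valuation $I(C,\cdot)$ is meromorphic as well. Since two meromorphic functions on a connected open set that agree on the nonempty open subset $\Omega$ must coincide as meromorphic functions, the formula holds as claimed, and in particular pointwise at every $\ell$ avoiding the hyperplanes $\ll \ell, u_i\rr = 0$.

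The main obstacle, and the reason the statement is phrased meromorphically, is that the integral representation is only valid on $\Omega$; the nontrivial point is that the unique meromorphic extension provided by Proposition \ref{prop:eValuationInt} must match the rational function obtained from the direct computation. The identity theorem for meromorphic functions handles this cleanly, so no further analytic subtlety intervenes.
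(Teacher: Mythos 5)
Your proof is correct and follows essentially the same route as the paper's source: the paper does not prove this proposition itself but cites Lemma~8.3 of \cite{barvinokzurichbook}, whose argument is exactly your reduction to $s=0$ via the translation property, the change of variables $x=\sum t_i u_i$ computing the integral for $\ell$ in the interior of the polar cone, and extension to all of $\C^d$ by uniqueness of the meromorphic continuation.
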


\begin{lemma}
\label{lemma:integration-triangulation}
For any triangulation $\mathcal{D}_s$ of the feasible cone $C_s$ at each of the vertices $s$ of the polytope $\PP$ we have $I(\PP,\ell) = \sum_{s \in V(\PP)} \sum_{C \in \mathcal{D}_s} I(s+C,\ell)$.
\end{lemma}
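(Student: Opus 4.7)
The plan is to apply Lemma \ref{lemma:intTriangulation} first to reduce the claim to a single-vertex statement, then verify the triangulation identity at each vertex by checking equality of meromorphic functions on an open set of convergence.

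Concretely, Lemma \ref{lemma:intTriangulation} gives $I(\PP,\ell) = \sum_{s \in V(\PP)} I(s + C_s, \ell)$, so it suffices to prove that for every vertex $s$ and every triangulation $\mathcal{D}_s = \{C_1, \dots, C_m\}$ of the feasible cone $C_s$, one has
\[ I(s + C_s, \ell) = \sum_{i=1}^{m} I(s + C_i, \ell). \]
Because $\mathcal{D}_s$ is a triangulation, the relative interiors of the $C_i$ are pairwise disjoint, the pairwise intersections $C_i \cap C_j$ are lower-dimensional common faces (hence have $d$-dimensional Lebesgue measure zero in $\R^d$), and $\bigcup_i C_i = C_s$. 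Translating by $s$ preserves this structure.

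The next step is to identify an open set of linear forms $\ell$ on which every integral $\int_{s + C_s} e^{\langle \ell, x \rangle} \d x$ and each $\int_{s + C_i} e^{\langle \ell, x \rangle} \d x$ is finite. This set contains $U := \{\ell \in \R^d : \langle \ell, u \rangle < 0 \text{ for all nonzero } u \in C_s\}$, which is non-empty precisely because $C_s$ is a pointed cone (a consequence of $s$ being a genuine vertex of $\PP$). For $\ell \in U$, Property~1 of Proposition \ref{prop:eValuationInt} identifies each $I$-value with the corresponding Lebesgue integral, and additivity of the Lebesgue integral over the almost-disjoint decomposition $s + C_s = \bigcup_i (s + C_i)$ yields the desired identity pointwise on $U$.

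Finally, both sides of the displayed identity are meromorphic functions of $\ell$ by Proposition \ref{prop:eValuationInt} (and can even be written in closed form via Proposition \ref{prop:eValuationIntCone} after simplicially triangulating each $C_i$), and they agree on the non-empty open set $U$. By uniqueness of analytic continuation, they therefore agree as meromorphic functions on all of $\R^d$. Summing the resulting per-vertex identities over $s \in V(\PP)$ yields the lemma. The main technical subtlety I anticipate lies in this continuation step: one must be sure that the pointwise identity is established on an honest non-empty open subset of $\ell$-space, which rests on the pointedness of the feasible cones $C_s$ at each vertex of $\PP$; once pointedness is in hand, the remaining ingredients (additivity of Lebesgue integrals and uniqueness of analytic continuation) are routine.
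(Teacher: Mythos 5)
Your argument is correct. The paper does not prove this lemma itself but simply cites Lemma~8.3 of \cite{barvinokzurichbook}, and your proof is essentially that standard argument: reduce to a single vertex via Lemma~\ref{lemma:intTriangulation}, verify the identity pointwise on the nonempty open set $\{\ell : \langle \ell, u\rangle < 0 \text{ for all nonzero } u \in C_s\}$ (nonempty by pointedness of the feasible cone at a vertex) using Property~1 of Proposition~\ref{prop:eValuationInt} and additivity of the Lebesgue integral over the measure-zero-overlap decomposition, and then extend by uniqueness of meromorphic continuation.
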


We will repeatedly use the next lemma to multiply series in polynomial time in fixed dimension. The idea is to multiply each factor, one at a time, truncating after total degree $M$. 

\begin{lemma}[Lemma 4 in \cite{baldoni-berline-deloera-koeppe-vergne:integration}]
\label{lemma:poly-mult}
For every fixed $d$, there exists a polynomial time algorithm that takes as input a number $M$ in unary encoding, and a sequence of $k$ polynomials $h_1, \dots, h_k \in \Q[x]$ in $d$ variables of total degree at most $M$ in dense monomial representation, and outputs the product $h_1\cdots h_k$ truncated at total degree $M$ in dense monomial representation. 
\end{lemma}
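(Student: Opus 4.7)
The plan is to establish polynomial-time complexity by performing iterated pairwise multiplication with truncation after every step, carefully bounding both the sizes of the intermediate polynomials and the bit-complexity of their coefficients. First I would bound the representation size: the number of monomials in $d$ variables with total degree at most $M$ is $\binom{M+d}{d}$, which is $O(M^d)$ for fixed $d$, hence polynomial in the input size because $M$ is given in unary. So every polynomial truncated at degree $M$ fits in polynomial space.

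Next I would describe pairwise truncated multiplication. Given two polynomials $h,h'$ of total degree at most $M$ in dense monomial form, form the truncated product $h \cdot h'$ by iterating over pairs of exponent vectors $(\alpha,\beta)$ with $|\alpha|+|\beta|\le M$ and accumulating $[h]_\alpha [h']_\beta$ into the coefficient of $x^{\alpha+\beta}$ of the output. The number of such pairs is at most $\binom{M+d}{d}^2 = O(M^{2d})$, so this requires only polynomially many rational arithmetic operations. Then I would iterate: set $p_1 := h_1$ and $p_i := (p_{i-1}\cdot h_i)$ truncated at degree $M$ for $i=2,\dots,k$. Each $p_i$ still has total degree at most $M$, so the pairwise step applies at every iteration, giving total cost $O(k\,M^{2d})$ arithmetic operations, polynomial in the input size.

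The main obstacle is the bit-complexity of the intermediate rational coefficients, which must be shown not to explode. If the input coefficients have bit-size bounded by $B$, then each coefficient of $p_i$ is a sum of at most $O(M^d)$ products of a coefficient of $p_{i-1}$ and a coefficient of $h_i$, so the bit-size grows by an additive $O(B + d\log M)$ per multiplication; after $k-1$ iterations it remains polynomial in $k$, $B$, and $\log M$. The essential subtlety is that truncation must be performed \emph{at every step}: if one computed the full product first and truncated only at the end, intermediate polynomials would have degree up to $kM$ and their size would no longer be polynomial in $M$. Once truncation is interleaved with multiplication, both the arithmetic count and the coefficient growth are under control, yielding the desired polynomial-time algorithm.
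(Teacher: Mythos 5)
Your proposal is correct and follows exactly the approach the paper indicates: the paper only cites this as Lemma~4 of \cite{baldoni-berline-deloera-koeppe-vergne:integration} and sketches the idea (``multiply each factor, one at a time, truncating after total degree $M$,'' for $O(kM^{2d})$ rational operations), which is precisely your iterated truncated pairwise multiplication with the same operation count. Your added coefficient-size accounting is a reasonable supplement (one would keep each polynomial over a single common denominator so that denominators grow additively across the $k$ steps), but it does not change the argument.
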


The algorithm in Lemma \ref{lemma:poly-mult} performs $O(kM^{2d})$ elementary rational operations, see \cite{baldoni-berline-deloera-koeppe-vergne:integration}.  The next proposition contains our new polynomial time algorithm for integrating a product of affine functions. 

\begin{proposition}
\label{prop:continuous-affnie-products-linear-forms}
When the dimension $d$ and number of factors $n$ is fixed, the value of 
\[\int_{\PP} \frac{(\langle \ell_1, x\rangle + r_1)^{m_1} \cdots \langle (\ell_n, x\rangle + r_n)^{m_n}}{m_1!\cdots m_n!} \d x\]
can be computed in polynomial time in $M:=\sum_{i=1}^nm_i$ and the binary encoding size of $\PP$, $\ell_i$ and $r_i$ for $1 \leq i \leq n$. 
\end{proposition}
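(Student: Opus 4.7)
The plan is to reduce the integral to extracting a coefficient of a multivariate generating function, and then apply the Brion--Lasserre--Lawrence--Varchenko decomposition (Lemmas \ref{lemma:intTriangulation} and \ref{lemma:integration-triangulation}) together with a generic perturbation trick. Introduce formal variables $t_1,\dots,t_n$ and consider
\[ F(t) := \int_\PP e^{\sum_{i=1}^n t_i (\ll \ell_i, x\rr + r_i)} \d x = e^{R(t)}\, I(\PP, L(t)), \]
where $L(t) := \sum_i t_i \ell_i$ is a linear form in $x$ depending linearly on $t$, and $R(t) := \sum_i t_i r_i$. Taylor-expanding both exponentials and matching coefficients, the desired integral equals $[t_1^{m_1}\cdots t_n^{m_n}]\,F(t)$. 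It therefore suffices to compute $F(t)$ modulo terms of total $t$-degree exceeding $M$.

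Next, for each vertex $s$ of $\PP$, triangulate the feasible cone $C_s$ into simplicial cones; for fixed $d$ this triangulation has polynomial size in the encoding of $\PP$. Proposition \ref{prop:eValuationIntCone} and Lemma \ref{lemma:integration-triangulation} then give
\[ I(\PP, L(t)) = \sum_{s\in V(\PP)}\sum_{C\in \mathcal{D}_s} \vol(\Pi_C)\, e^{\ll L(t), s\rr} \prod_{j=1}^d \frac{1}{-\ll L(t), u_{j,C}\rr}. \]
Each denominator $\ll L(t), u_{j,C}\rr$ is a linear form in $t$ vanishing at $t=0$, so the individual cone contributions are singular there, even though the sum is analytic. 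To sidestep this, choose a rational $\xi\in\Q^d$ with $\ll \xi, u_{j,C}\rr \neq 0$ for every ray $u_{j,C}$ in every $\mathcal{D}_s$; such a $\xi$ can be found in polynomial time because only polynomially many linear conditions must be avoided. Replacing $L(t)$ by $L(t)+\epsilon\xi$ for a fresh variable $\epsilon$ yields denominators $-\ll L(t)+\epsilon\xi, u_{j,C}\rr$ whose constant-in-$t$ term is the nonzero scalar $-\epsilon\ll \xi, u_{j,C}\rr$, so each summand admits a Laurent expansion in $\epsilon$ with power-series coefficients in $t$.

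The computational core is to expand, for each cone $C$,
\[ \vol(\Pi_C)\, e^{\ll L(t)+\epsilon\xi,\, s\rr} \prod_{j=1}^d \frac{1}{-\epsilon\ll \xi, u_{j,C}\rr} \sum_{k_j\ge 0}\left(-\frac{\ll L(t), u_{j,C}\rr}{\epsilon\ll \xi, u_{j,C}\rr}\right)^{k_j}, \]
truncated at total $t$-degree $M$, via the polynomial-multiplication subroutine of Lemma \ref{lemma:poly-mult}. Since $d$ and $n$ are fixed and $M$ is unary, only polynomially many monomials in $t$ and polynomially many powers of $\epsilon$ (between $\epsilon^{-d(M+1)}$ and $\epsilon^0$) need to be tracked. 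Summing these truncated Laurent polynomials over all vertices and all simplicial cones, and then extracting the coefficient of $\epsilon^0$, recovers $I(\PP, L(t))$ modulo $t^{M+1}$: indeed, since $I(\PP, L(t)+\epsilon\xi)$ is analytic in $\epsilon$ at $\epsilon=0$, every negative power of $\epsilon$ must cancel in the summation. Finally, multiply by the truncated Taylor series of $e^{R(t)}$ again using Lemma \ref{lemma:poly-mult}, read off the $t_1^{m_1}\cdots t_n^{m_n}$-coefficient, and multiply by $m_1!\cdots m_n!$ to obtain the desired integral.

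The principal obstacle is the bookkeeping in the Laurent expansion in $\epsilon$ and the rigorous justification that the $\epsilon$-pole terms cancel after summation over all cones, together with a careful bit-complexity analysis showing that $\xi$ can be selected with polynomially bounded encoding and that all intermediate rational coefficients remain of polynomial binary size. Once these points are settled, the procedure runs in time polynomial in $M$ and in the binary encoding size of $\PP$, the $\ell_i$, and the $r_i$, as claimed.
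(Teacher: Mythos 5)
Your proposal is correct and follows essentially the same route as the paper: reduce the integral to a coefficient of $\int_\PP e^{\sum_i t_i(\ll\ell_i,x\rr+r_i)}\d x$, apply the Brion-type cone decomposition, introduce a generic perturbation direction (your $\epsilon\xi$ plays exactly the role of the paper's $\ell_{n+1}t_{n+1}$), expand each cone's contribution as a truncated Laurent series via Lemma~\ref{lemma:poly-mult}, and extract the $\epsilon^0$ (resp.\ $t_{n+1}^0$) part, justified by holomorphy of $I(\PP,\cdot)$. The only blemish is the final instruction to multiply the extracted coefficient by $m_1!\cdots m_n!$: the statement's integrand already carries the factor $1/(m_1!\cdots m_n!)$, so the coefficient itself is the desired value.
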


\begin{proof}
We will compute the polynomial
\begin{equation}
\label{eq:intAffineProdAnswer}
\sum_{p_1+\cdots+p_n \leq M} \left( \int_{\PP} \frac{(\langle \ell_1, x\rangle + r_1)^{p_1} \cdots (\langle \ell_n, x\rangle + r_n)^{p_n}}{p_1!\cdots p_n!}  \d x \right) t_1^{p_1}\cdots t_n^{p_n}. 
\end{equation}
in dense representation in polynomial time in $M$ and the input data, which contains the desired value as a coefficient of a polynomial in $t_1, \dots, t_n$. This is useful when integrating a polynomial in a Handelman form.

Using Propositions \ref{prop:eValuationInt} and \ref{prop:eValuationIntCone} and Lemmas \ref{lemma:intTriangulation} and \ref{lemma:integration-triangulation}, we start with the exponential integral in the indeterminate $\ell$:
\begin{equation}
\label{eq:intOverE}
\int_{\PP} e^{\langle \ell, x\rangle} \d x
=
\sum_{s \in V(\PP)}\sum_{C \in \mathcal{D}_s} \mathrm{vol}(\Pi_C) e^{\ll \ell, s\rr}  \prod_{i=1}^d \frac{1}{-\ll \ell, u_i\rr}.
\end{equation}
Note that because the dimension is fixed, the number of vertices and the number of simplicial cones at each feasible cone of $\PP$ is polynomial in the input size of $\PP$, see Chapter 8---and in particular the Upper Bound Theorem---in  \cite{DRStriangbook}.

First pick $\ell_{n+1} \in \Q^d$ so that $\ll \ell_{n+1}, u \rr \neq 0$ for every ray $u$ in the simplicial cones $\mathcal{D}_s$ at each vertex $s$. The set of points $\ell_{n+1}$ that fail this condition have measure zero, so $\ell_{n+1}$ can be picked randomly. Next, replace $\ell$ with $\ell_1t_1 + \cdots + \ell_{n+1}t_{n+1}$ in Equation \eqref{eq:intOverE}. To simplify notation, let $\ve t = (t_1, \dots, t_n)$, $\ve a_s = ( \ll \ell_1, s \rr, \dots,  \ll \ell_{n}, s \rr)$, $\ve r:= (r_1, \dots,  r_n)$, $\ve b_i := (\ll \ell_1, u_i \rr,  \dots, \ll \ell_{n}, u_i \rr)$, $\beta_i := \ll \ell_{n+1}, u_i \rr$, and $\boldsymbol{\ell_x} = (\ll \ell_1, x\rr, \dots, \ll \ell_{n}, x\rr)$. Then Equation \eqref{eq:intOverE} becomes
\begin{equation}
\label{eq:intOverE-sub}
\int_{\PP} e^{ \ll \boldsymbol{\ell_x}, \ve t\rr  + \ll \ve  r , \ve t \rr + \ll \ell_{n+1}, x\rr t_{n+1}}\d x
=
\sum_{s \in V(\PP)}\sum_{C \in \mathcal{D}_s} \mathrm{vol}(\Pi_C)   \prod_{i=1}^d \frac{e^{\ll \ve a_s, \ve t \rr}e^{\ll \ve r, \ve t\rr}e^{\ll \ell_{n+1}, s \rr t_{n+1}}}{-\ll \ve b_i, \ve t \rr - \beta_it_{n+1}}.
\end{equation}
If the $e^{(\ll \ell_i, x\rr + r_i) t_i}$ in Equation \eqref{eq:intOverE-sub} are expanded according to $e^z = \sum \frac{z^k}{k!}$ and multiplied together, then the left hand side of Equation \eqref{eq:intOverE-sub} is 
\begin{equation}
\label{eq:intAffineProdInfSum}
\sum_{p_i \in \Z_{\geq 0}} \left( \int_{\PP} \frac{(\langle \ell_1, x\rangle + r_1)^{p_1} \cdots (\langle \ell_n, x\rangle + r_n)^{p_n} \cdot \ll \ell_{n_1}, x \rr^{p_{n+1}}}{p_1!\cdots p_{n+1}!}  \d x \right) t_1^{p_1}\cdots t_{n+1}^{p_{n+1}}. 
\end{equation}
To compute Equation \eqref{eq:intAffineProdAnswer} it is sufficient to compute the series expansion of each summand in the right hand side of Equation \eqref{eq:intOverE-sub} in $t_1, \dots, t_n$ up to total degree $M$ where the power of $t_{n+1}$ is zero, and do this in polynomial time in $M$. Let $h_i(t) = \sum_{k=0}^M (\ll \ell_i, s\rr + r_i)^k t_i^k /k!$ for $1 \leq i \leq n$ and note that $h_i(t)$ is the truncated series expansion of $e^{(\ll \ell_i, s\rr + r_i)t_i}$. For $1 \leq i \leq d$ let

\[h_{n + i}(t) = \sum_{k=0}^M (-1)^k (\ll \ve b_i, \ve t \rr)^k (\beta_{ij} t_{n+1})^{-1-k}\] using a dense polynomial representation. Note that $h_{n+i}$ is the truncated series from the generalized binomial  theorem applied to $\frac{1}{-\ll \ve b_i, \ve t \rr - \beta_it_{n+1}}$. Each $h_{n+i}$ for $1 \leq i \leq d$ is a polynomial in $t_1, \dots, t_n$ of degree $M$ with negative powers on $t_{n+1}$. Applying Lemma \ref{lemma:poly-mult} to $H_1 := \prod_{i=1}^{n+d} h_i$ results in the series expansion of 

\[\prod_{i=1}^d \frac{e^{\ll \ve a_s, \ve t \rr}e^{\ll \ve r, \ve t\rr}}{-\ll \ve b_i, \ve t \rr - \beta_it_{n+1}}.
\]
up to total degree $M$ in $t_1, \dots, t_{n}$ and where the power of $t_{n+1}$ at most ranges from $-d(M+1)$ to $-d$. 

Let $h_{n+d+1}(t)$ be the series expansion of $e^{\ll \ell_{n+1}, s \rr t_{n+1}}$ up to degree $d(M+1)$ in $t_{n+1}$. Next, use Lemma \ref{lemma:poly-mult} one last time while treating $t_{n+1}$ as a coefficient and truncating at total degree $M$ to compute $H_2 := H_1 h_{n+d+1}$. Any term where the power of $t_{n+1}$ is not zero can be dropped because $I(\PP, \ell_1t_1 + \cdots + \ell_{n+1}t_{n+1})$ is holomorphic in $t_{n+1}$. Repeating this calculation for every summand in the right hand side of Equation \eqref{eq:intOverE-sub} and adding the results  produces Equation \eqref{eq:intAffineProdAnswer}.
\qed
\end{proof}


\section{Complexity guarantees for Handelman and generating functions} \label{proof-thm-last}

Algorithm \ref{alg:knorm-handelman} below gives the final connection between optimizing a polynomial, computing integrals, and a Handelman decomposition. 

\begin{algorithm}
\caption{Computing $\int_\PP (f(x)+s)^k\d x$ via Handelman}
\label{alg:knorm-handelman}
\begin{flushleft}
Input: A polynomial $f(x) \in \Q[x_1, \dots, x_d]$ of degree $D$, polytope $\PP$, and $k$.

Output: $s$ such that $f(x)+s \geq 0$ on $\PP$, and $\int_\PP (f(x)+s)^k \d x$.
\end{flushleft}

\begin{enumerate}
\item Let $t \leftarrow D(D-1)+1$
\item  Find $s \in \R$ and $c_\alpha \geq 0$ such that $f(x)+s = \sum_{|\alpha| \leq t} c_\alpha g^\alpha$ by solving the linear program in Equation \eqref{eq:sparse-lp-handelman}
\item Expand $h(x) := (\sum_{|\alpha| \leq t} c_\alpha g^\alpha)^k$ in terms of $g$ to get a new sum of products of affine functions
\item Integrate each Handelman monomial in $h(x)$ by the method in Proposition \ref{prop:continuous-affnie-products-linear-forms} and sum the results
\end{enumerate}
\end{algorithm}

The proof of the following proposition gives a proof of part (1) of Theorem \ref{theorem:handelman-good-s-on-simplex}.

\begin{proposition}
Fix the dimension $d$ and the number of facets $n$ in $\PP$.  Algorithm \ref{alg:knorm-handelman} runs in time polynomial in the unary encoding size of $k$ and $D$, and the binary encoding  size of $f$ and $\PP$.
\end{proposition}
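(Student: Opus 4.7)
The plan is to show that each of the four steps of Algorithm \ref{alg:knorm-handelman} runs in polynomial time under the standing hypothesis that $d$ and $n$ are fixed, by assembling results already stated earlier in the excerpt. Step~1 is immediate: $t\leftarrow D(D-1)+1$ is polynomial in $D$.

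For Step~2, the linear program \eqref{eq:sparse-lp-handelman} has $\binom{t+n}{n}+1$ variables (the $c_\alpha$ together with the free shift $s$) and $\binom{t+d}{d}$ equality constraints obtained by matching the coefficients of $f(x)+s$ and $\sum_{|\alpha|\leq t} c_\alpha g^\alpha$ in the monomial basis of $x$. Because $d$ and $n$ are fixed and $t=O(D^2)$, both binomials are polynomial in $D$, and the entries of the constraint matrix are polynomial-size rationals computed from the coefficients of $f$ and the $g_i$. Feasibility is guaranteed by Theorem \ref{thm:large-s-given-t}; boundedness follows because any feasible $(s,\{c_\alpha\})$ satisfies $f(x)+s=\sum c_\alpha g^\alpha(x)\geq 0$ for every $x\in\PP$ (since $c_\alpha\geq 0$ and $g_i\geq 0$ on $\PP$), whence $s\geq -\fmin$ and $s+\sum c_\alpha\geq -\fmin$. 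A polynomial-time LP algorithm (e.g., the ellipsoid method) then returns an optimal $(s,\{c_\alpha\})$ as rationals of polynomial bit size.

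For Step~3, the crucial move is to keep the computation in the $n$-dimensional Handelman space rather than expand the $g_i$'s back into $x$-monomials. I would introduce the auxiliary polynomial $p(y_1,\dots,y_n):=\sum_{|\alpha|\leq t} c_\alpha y^\alpha$ in $n$ fixed formal variables of total degree $t$, so that $(f(x)+s)^k=p(g_1(x),\dots,g_n(x))^k$. Computing $p(y)^k$ in dense monomial form by iterating Lemma \ref{lemma:poly-mult} $k-1$ times (multiplying $p$ by the running product, truncated at total degree $tk$) takes $O(k(tk)^{2n})$ arithmetic operations on polynomial-size rationals, and is therefore polynomial in $k$ and $D$. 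Relabeling each $y^\beta$ as the Handelman monomial $g^\beta$ then writes $(f(x)+s)^k$ as a sum of at most $\binom{tk+n}{n}=\mathrm{poly}(k,D)$ such monomials.

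For Step~4, each Handelman monomial is a product of at most $n$ (fixed) affine linear forms in $x$ of total degree at most $tk$, so Proposition \ref{prop:continuous-affnie-products-linear-forms} integrates it in time polynomial in $tk$ and the binary sizes of $\PP$ and the $g_i$. Summing over the polynomially many monomials produces $\int_\PP(f(x)+s)^k \d x$ in polynomial total time. The main obstacle to watch out for is precisely the representation choice in Step~3: expanding the $g_i(x)^{\beta_i}$'s into $x$-monomials at any intermediate stage would destroy the fixed-$n$ bound on the number of Handelman monomials of bounded degree and could blow up intermediate sizes. Keeping the computation entirely in the $y$-space until after the $k$-fold multiplication, and invoking Proposition \ref{prop:continuous-affnie-products-linear-forms} only on the final Handelman monomials, circumvents this and completes the proof.
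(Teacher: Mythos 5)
Your proof is correct and follows essentially the same route as the paper's (which is only a four-line sketch): feasibility of the LP via Theorem \ref{thm:large-s-given-t}, polynomial LP dimensions for fixed $d$ and $n$, at most $\binom{kt+n}{n}$ Handelman monomials after expansion, and Proposition \ref{prop:continuous-affnie-products-linear-forms} for each integral. The extra details you supply --- the boundedness of the LP objective via $s \ge -\fmin$, and performing the $k$-th power entirely in the $n$-variable $y$-space via Lemma \ref{lemma:poly-mult} rather than in $x$-monomials --- are correct and fill in points the paper leaves implicit.
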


\begin{proof}
By Theorem \ref{thm:large-s-given-t}, $f(x)+s$ has a degree $t=D(D-1)+1$ Handelman decomposition for some $s$. Then because $t$ is $O(D^2)$, the linear program that is solved has a matrix with dimension at most $O(D^d) \times O(D^{2n})$. Expanding $(\sum_{|\alpha| \leq t} c_\alpha g^\alpha)^k$ has at most ${ kt + n \choose n} = O((kD^2)^n)$ Handelman terms. Finally, each term can be integrated in time polynomial in $kt$ by Proposition \ref{prop:continuous-affnie-products-linear-forms}.  \qed
\end{proof}

Notice that Steps (2) and (3) in Algorithm \ref{alg:knorm-handelman} can be swapped, resulting in the same time complexity. For example, $h(x) := (f(x) +s)^k$ could first be expanded, and then a Handelman decomposition could be found for $h(x)$.

This method is quite good in practice, but  one wishes to have control on the values of the constant $s$ that are compatible with  Theorem \ref{theorem:continuous-lkuk-bounds} in the sense
that the maximum for $f$ is approximated well. When $\PP$ is the standard simplex the size of $s$ can be better controlled. This guarantee can be provided using the following result of Laurent and Sun.

\begin{lemma}[See page 4 in \cite{monique2014}]
\label{lemma:s-bound-for-simplex}
Let $q(x)$ be a polynomial of degree $D$ and let $s = \min \lambda$ such that $\lambda - q(x)$ has a Handelman decomposition of degree $t \geq D$ on the standard simplex $\Delta_d = \{x \in \R^d \mid x_i \geq 0, \sum x_i = 1\}$. Then $s - q_{\text{\rm max}} \leq D^D \binom{2D-1}{D} \frac{\binom{D}{2}}{t - \binom{D}{2}} (q_{\text{\rm max}} - q_{\text{\rm min}})$.
\end{lemma}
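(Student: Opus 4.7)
My plan is to derive the bound via the quantitative P\'olya-type result already recorded as Lemma~\ref{lemma:PowersPolya}, applied to a careful homogenization of the shifted polynomial $\lambda - q$. First, I would homogenize: replace each monomial $a_\alpha x^\alpha$ of $q$ with $|\alpha| < D$ by $a_\alpha x^\alpha (x_1+\cdots+x_d)^{D-|\alpha|}$ to obtain a homogeneous polynomial $\tilde q$ of degree $D$. Since $\sum_i x_i = 1$ identically on $\Delta_d$, this does not alter values on $\Delta_d$, and in particular $\tilde q_{\max} = q_{\max}$ and $\tilde q_{\min} = q_{\min}$. Set $p(x) := \lambda (\sum_i x_i)^D - \tilde q(x)$; this is homogeneous of degree $D$ with $\min_{\Delta_d} p = \lambda - q_{\max}$. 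Because the facet-defining inequalities of $\Delta_d$ are $g_i(x)=x_i\geq 0$, a degree-$t$ Handelman decomposition of $\lambda - q$ exists on $\Delta_d$ exactly when $(\sum_i x_i)^{t-D} p(x)$ has all nonnegative monomial coefficients.

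Next, I would apply Lemma~\ref{lemma:PowersPolya} directly to $p$. It tells us that the coefficients of $(\sum_i x_i)^{t-D} p$ are nonnegative once $t - D > \binom{D}{2}\, L(p)/(\lambda - q_{\max}) - D$, which rearranges to $\lambda - q_{\max} > \binom{D}{2}\, L(p)/(t - \binom{D}{2})$. Taking the infimum over admissible $\lambda$ gives
\[ s - q_{\max} \;\leq\; \frac{\binom{D}{2}}{t - \binom{D}{2}}\, L(p), \]
so everything now reduces to a sharp estimate of $L(p)$.

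The main obstacle is thus the bound
\[ L(p) \;\leq\; D^D \binom{2D-1}{D}\,(q_{\max} - q_{\min}), \]
and this is the step I expect to be hardest. The idea is that for a homogeneous polynomial $h$ of degree $D$ on $\Delta_d$, the normalized coefficient $a_\alpha\,\alpha!/D!$ is precisely the $\alpha$th Bernstein--B\'ezier coefficient, which can be recovered from a finite-difference combination of the grid values $h(\beta/D)$ for $|\beta|=D$; the coefficients of this inversion on the lattice $\{\beta/D : |\beta|=D\}$ contribute a factor bounded by $D^D$, and the number of grid points being summed contributes $\binom{2D-1}{D}$. Moreover, since $p$ differs from $-\tilde q$ by only the constant $\lambda$ on $\Delta_d$, the inversion kernel annihilates the constant shift, so the resulting bound involves only $\tilde q_{\max} - \tilde q_{\min} = q_{\max} - q_{\min}$ rather than any cruder coefficient norm of $q$. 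Plugging this bound on $L(p)$ into the previous display yields the claimed estimate.

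The delicate part is isolating the precise constant $D^D\binom{2D-1}{D}$: I would proceed by writing out the explicit Bernstein-to-monomial change of basis on the lattice $\{\beta/D : |\beta|=D\}$, applying the triangle inequality term by term, and then bounding the magnitude of each finite-difference kernel uniformly. The cancellation of the shift $\lambda$ has to be tracked carefully to make sure the bound is in terms of the oscillation $q_{\max}-q_{\min}$ and not $|\lambda|+\|q\|_\infty$; this is the crucial mechanism that makes the final bound scale-invariant under constant shifts of~$q$, which is what one needs for the intended application inside Theorem~\ref{theorem:handelman-good-s-on-simplex}.
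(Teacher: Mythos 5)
The paper itself does not prove this lemma---it is imported verbatim from Laurent and Sun \cite{monique2014}---so your attempt can only be measured against the source's argument. Your skeleton is the right one: homogenize $q$ to $\tilde q$, reduce a degree-$t$ Handelman certificate for $\lambda-q$ on $\Delta_d$ (whose facet polynomials are $g_i=x_i$) to nonnegativity of the monomial coefficients of $(\sum_i x_i)^{t-D}\bigl(\lambda(\sum_i x_i)^D-\tilde q\bigr)$, and invoke the quantitative P\'olya bound of Lemma~\ref{lemma:PowersPolya}. The sufficiency direction of that reduction is fine, and reducing everything to a bound on $L(p)$ is the correct final move.

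However, there is a genuine gap at exactly the step you flag as hardest. The claimed bound $L(p)\leq D^D\binom{2D-1}{D}(q_{\max}-q_{\min})$ for $p=\lambda(\sum_i x_i)^D-\tilde q$ is false: $L$ is a coefficient norm, not a function of the restriction of $p$ to $\Delta_d$, and adding a multiple of $(\sum_i x_i)^D$ is \emph{not} annihilated by it. Concretely, $L(p)=\max_\alpha\bigl|\lambda-\tilde a_\alpha\,\alpha!/D!\bigr|$ grows without bound as $\lambda\to\infty$ while your right-hand side stays fixed; already $L\bigl((\sum_i x_i)^D\bigr)=1$ although that polynomial has zero oscillation on $\Delta_d$, so no inversion kernel can ``annihilate the constant shift.'' Because $L(p)$ depends on $\lambda$, your application of Lemma~\ref{lemma:PowersPolya} is also self-referential and does not directly bound $s=\inf\lambda$. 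The missing idea is the subadditivity split $L(p)\leq(\lambda-q_{\max})+L\bigl(q_{\max}(\sum_i x_i)^D-\tilde q\bigr)$: the second summand is $\lambda$-free, nonnegative on $\Delta_d$ with maximum $q_{\max}-q_{\min}$, and it is to \emph{this} polynomial that the de Klerk--Laurent--Parrilo coefficient estimate $L(h)\leq D^D\binom{2D-1}{D}\max_{\Delta_d}|h|$ (your Bernstein/grid argument) should be applied. Feeding the split into Lemma~\ref{lemma:PowersPolya} makes the sufficient condition $t>\binom{D}{2}\bigl(1+D^D\binom{2D-1}{D}(q_{\max}-q_{\min})/(\lambda-q_{\max})\bigr)$, and it is precisely this ``$+1$'' that produces the denominator $t-\binom{D}{2}$; your derivation instead obtains $t-\binom{D}{2}$ from a ``rearrangement'' of the P\'olya condition that actually yields denominator $t$. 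The two slips happen to compensate to the correct final inequality, but the argument as written does not establish it.
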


%
%

\begin{proof}\textit{of Theorem \ref{theorem:handelman-good-s-on-simplex}, part 2:}

What remains to be shown is the statement when $\PP$ is a simplex. Let $\Delta$ be a full dimensional simplex in $\R^d$ given by the convex hull of its vertices: $\Delta = \Conv(u_1, \dots, u_{d+1})$. Let $H$ be the homogenization map given by $(x_1,\dots,x_d) \mapsto (x_1,\dots,x_d,1)$, and $\Pi$ be the corresponding projection given by $(x_1,\dots,x_d, x_{d+1}) \mapsto (x_1,\dots,x_d)$. Let $U \in \R^{d+1 \times d+1}$ be the invertible matrix whose columns are $Hu_i$ for $i=1,\dots, d+1$, then $U$ is a linear map from $\Delta_{d+1}$ to $H\Delta$. We now apply Lemma \ref{lemma:s-bound-for-simplex} with $q(y) = -f(\Pi Uy)$ for $y \in \Delta_{d+1}$.

First note that \[q_{\text{\rm min}} = \min_{y \in \Delta_{d+1}} q(y) = \min_{y \in \Delta_{d+1}} -f(\Pi Uy) = \min_{x \in \Delta} -f(x) = -\fmax,\]
and similarly, $q_{\text{\rm max}} = -\fmin$. Let $s' = \min \lambda$ such that $f(x)+\lambda$ has a Handelman decomposition of degree $t=D(D-1)+1$ on $\Delta$. Unlike Equation \eqref{eq:sparse-lp-handelman}, there is no sparsity term in the objective. Let $s = \min \lambda$ such that $f(\Pi Uy)+\lambda$ has a Handelman decomposition of degree $t=D(D-1)+1$ on $\Delta_{d+1}$. We claim $s' = s$. First we show $s' \leq s.$ There exist $c_\alpha \geq 0$ such that 
\begin{equation}
\label{eq:f-handelman-on-std-simplex}
f(\Pi Uy)+s = \sum_{|\alpha| \leq t} c_\alpha \prod_{i=1}^{d+1}(y_i)^{\alpha_i}\left(1 -\sum y_i\right)^{\alpha_{d+1}}\left(\sum y_i - 1\right)^{\alpha_{d+2}}, 
\end{equation}
where the factors associated with $\alpha_{d+1}$ and $\alpha_{d+1}$ come from writing the equality constraint in $\Delta_{d+1}$ as two inequality constraints. We will rewrite Equation \eqref{eq:f-handelman-on-std-simplex} as a Handelman decomposition on $\Delta$. By \cite{fukuda1996double} applied to a simple cone, the rows of $U^{-1}Hx = U^{-1}\begin{pmatrix}
x \\ 1
\end{pmatrix} \geq 0$ is the inequality description of $\Delta$ (up to a scaling). Let $y = U^{-1}Hx$. Write  $u \in \Delta$ as a convex combination of the $u_i$: $u = \sum_{i=1}^{d+1} \beta_i u_i$ where $b_i \geq 0$ and $\sum b_i = 1$. Then if $\textbf{1}$ is the all-ones vector in $\R^{d+1}$, $\sum y_i = \textbf{1}^Ty = \textbf{1}^TU^{-1}(\sum \beta_i Hu_i) = 1$. Therefore $\sum y_i = 1$ under the map $y = U^{-1}Hx$ for all $x \in \Delta$. Because $\Delta$ is full-dimensional, $\sum y_i = 1$ for every $x\in \R^d$ with $y = U^{-1}Hx$. Applying $y = U^{-1}Hx$ to both sides of Equation \eqref{eq:f-handelman-on-std-simplex} results in $f(x)+s$ written as a Handelman polynomial of degree $t$ on $\Delta$ and so $s' \leq s$. A similar argument shows $s \leq s'.$ Therefore, let $s$ be computed by solving the  linear program $s = \min \lambda$ such that $f(x)+\lambda$ has a degree $t=D(D-1)+1$ Handelman decomposition on $\Delta$, similar to Equation \eqref{eq:sparse-lp-handelman}. This is done in polynomial time in the unary encoding size of $D$ and binary encoding size of $f(x)$ and $\PP$ when $d$ is fixed. The dimensions of the linear system is $O(t^d)\times O(t^{d+1})$. 

Next, applying Lemma \ref{lemma:s-bound-for-simplex} with $t=D(D-1)+1$ and $g(y) = -f(\Pi Uy)$ on $\Delta_{d+1}$ shows that 
\begin{equation}
\label{eq:s-bound-on-simplex}
 s + \fmin \leq D^D \binom{2D-1}{D} \frac{\binom{D}{2}}{t - \binom{D}{2}} (\fmax - \fmin) \leq D^D \binom{2D-1}{D} (\fmax - \fmin).
\end{equation}
Define $c:= D^D \binom{2D-1}{D}$ and $\epsilon' := \epsilon \frac{1}{2c}$. Then applying Theorem \ref{theorem:continuous-lkuk-bounds} with $\epsilon'$ to $f(x)+s$ together with Equation \eqref{eq:s-bound-on-simplex} shows that
\begin{align*}
U_k - L_k &\leq \epsilon' (\fmax + s) \\
& \leq \epsilon' (\fmax -\fmin + c (\fmax -\fmin)) \\
& = \epsilon' (\fmax -\fmin) + \epsilon' (c\cdot (\fmax -\fmin)) \\
& \leq 2\epsilon' c (\fmax -\fmin) \\
& = \epsilon (\fmax -\fmin).
\end{align*}
\qed
\end{proof} 

In conclusion, Algorithm \ref{alg:knorm-handelman} is a simple practical approximation algorithm for optimizing a polynomial over a polytope, and we hope the theoretical guarantees of Theorem \ref{theorem:handelman-good-s-on-simplex} 
 will be extended further in the future.



%
%

\section{Acknowledgments}
The authors are grateful to the excellent and very detailed comments of the referees. Their comments and care
made a major contribution to improve the quality of this paper and outlined many new ideas and questions.
Jes\'us A. De Loera was supported by the NSA. Brandon Dutra was supported by NSF DGE-1148897. 

\bibliography{biblioCopy}{}
\bibliographystyle{spmpsci}

\end{document}